\newtheorem{Thm}{Theorem}{\bfseries}{\itshape}
\newtheorem*{Thm*}{Theorem}{\bfseries}{\itshape}
\newtheorem{Cor}{Corollary}{\bfseries}{\itshape}
\newtheorem{Prop}[Cor]{Proposition}{\bfseries}{\itshape}
\newtheorem{Lem}[Cor]{Lemma}{\bfseries}{\itshape}
\newtheorem*{Lem*}{Lemma}{\bfseries}{\itshape}
\newtheorem{Fact}[Cor]{Fact}{\bfseries}{\itshape}
{\bfseries}{\itshape}
\newtheorem{Def}[Cor]{Definition}{\bfseries}{\rmfamily}
\newtheorem{Ex}[Cor]{Example}{\scshape}{\rmfamily}
\newtheorem{Rem}[Cor]{Remark}{\scshape}{\rmfamily}
{\bfseries}{\itshape}
\renewcommand\ge{\geqslant} \renewcommand\le{\leqslant}
\let\tildeaccent=\~ \let\hataccent=\^
\renewcommand\~[1]{\widetilde{#1}}
\def\<{\left<} \def\>{\right>} \def\({\left(} \def\){\right)}
\def\ac{{\overline{\rm ac}}}
\def\FF{{\mathbb F}}
\def\AA{{\mathbb A}}
\def\llp{\mathopen{(\!(}}
\def\rrp{\mathopen{)\!)}}
\let\polishL=l \def\Zoladek.{\.Zol\c adek}
 \def\Im{\operatorname{Im}}
 \def\ord{\operatorname{ord}}
\def\etc.{\emph{etc}.}
\def\:{\colon} \def\R{{\mathbb R}} \def\C{{\mathbb C}} \def\Z{{\mathbb
    Z}} \def\N{{\mathbb N}} \def\Q{{\mathbb Q}}
\def\A{{\mathbb A}}
\let\PolishL=\L 
\def\L{{\mathbb L}}
 \def\e{\varepsilon}
\def\poly{{\operatorname{poly}}}
 \def\d{\,\mathrm d}
 \def\Lojas.{\PolishL ojasiewicz}
 \def\cL{{\mathcal L}}
\def\cO{{\mathcal O}}
\def\rest#1{{\vert_{#1}}}
\def\supp{\operatorname{supp}}
\def\alg{\mathrm{alg}}
\def\trans{\mathrm{trans}}
\def\vf{{\mathbf f}}
\def\vg{{\mathbf g}}
\def\vx{{\mathbf x}}
\def\vz{{\mathbf z}}
\def\vw{{\mathbf w}}
\def\vp{{\mathbf p}}
\begin{document}

\title[Point counting and Wilkie's conjecture over function fields]
{Point counting and Wilkie's conjecture for non-archimedean Pfaffian and Noetherian functions}

\author{Gal Binyamini}
\address{Weizmann Institute of Science, Rehovot, Israel}
\email{gal.binyamini@weizmann.ac.il}

\author{Raf Cluckers}
\address{Univ.~Lille,
CNRS, UMR 8524 - Laboratoire Paul Painlev\'e, F-59000 Lille, France, and,
KU Leuven, Department of Mathematics, B-3001 Leu\-ven, Bel\-gium}
\email{Raf.Cluckers@univ-lille.fr}
\urladdr{http://rcluckers.perso.math.cnrs.fr/}

\author{Dmitry Novikov}
\address{Weizmann Institute of Science, Rehovot, Israel}
\email{dmitry.novikov@weizmann.ac.il}

\thanks{The authors would like to thank the Fields Institute where the first ideas of this paper emerged, and Florent Martin and Arne Smeets for input during preliminary stages of the project.
R.\,C. was partially supported by the European Research Council under the European Community's Seventh Framework Programme (FP7/2007-2013) with ERC Grant Agreement nr. 615722 MOTMELSUM, KU Leuven IF C14/17/083, and Labex CEMPI  (ANR-11-LABX-0007-01). This research was partially supported by the ISRAEL SCIENCE FOUNDATION (grant No. 1167/17) and by funding received from the MINERVA
Stiftung with the funds from the BMBF of the Federal Republic of  Germany. This project has received funding from the European Research Council (ERC) under the European Union's Horizon 2020 research and innovation programme (grant agreement No 802107). The authors would like to express their gratitude to the anonymous referees for
several comments improving the accuracy and readability of the text}

\subjclass[2010]{Primary 14G05, 11U09;   Secondary 03C98, 11D88, 11G50}
\keywords{Rational points of bounded height, non-archimedean geometry, Wilkie's conjecture, Pfaffian functions, Noetherian functions}

\begin{abstract}
  We consider the problem of counting polynomial curves on analytic or
  definable subsets over the field $\C\llp t \rrp$, as a function of
  the degree $r$. A result of this type could be expected by analogy
  with the classical Pila-Wilkie counting theorem in the archimedean
  situation.

  Some non-archimedean analogs of this type have been developed in the
  work of Cluckers-Comte-Loeser for the field $\Q_p$, but the
  situation in $\C\llp t \rrp$ appears to be significantly
  different. We prove that the set of polynomial curves of a fixed
  degree $r$ on the transcendental part of a subanalytic set over
  $\C\llp t \rrp$ is automatically finite, but give examples showing
  that their number may grow \emph{arbitrarily quickly} even for analytic
  sets. Thus no analog of the Pila-Wilkie theorem can be expected to
  hold for general analytic sets. On the other hand we show that if
  one restricts to varieties defined by Pfaffian or Noetherian
  functions, then the number grows at most \emph{polynomially} in $r$,
  thus showing that the analog of the Wilkie conjecture does hold in
  this context.
\end{abstract}
\date{\today}
\maketitle

\section{Introduction}

\subsection{Point counting in archimedean and non-archimedean fields}

Over the reals, it is known by bounds of Pila and Wilkie \cite{PW}
that the number of rational points of height at most $B$ on the
transcendental part of analytic varieties (or even definable sets in
o-minimal structures) is bounded above by $cB^\varepsilon$ for some
$c=c(\varepsilon)$ and with $\varepsilon>0$. Such bounds also hold
over $\Q_p$ by \cite{CCL-PW}, and, with uniformity in $p$ by
\cite{CFL}. However, for $\C\llp t \rrp$, the question about
appropriate upper bounds for rational points on analytic (definable)
sets is left open in \cite{CCL-PW}. In this context, it is natural to
count points from $\C[t]$ lying in the transcendental part of a
definable set, as a function of the degree $r$.
The question of the finiteness of the set of such polynomials of bounded degree
is discussed in \cite[Section 5.5]{CCL-PW},
as well as the possibility of Wilkie type bounds under
some extra Pfaffian style conditions, but the methods of loc.~cit.~did not allow to establish any bounds on these sets, not even their finiteness.

\subsection{General definable sets: finiteness and a negative result}

In the first part of this paper we consider the analog of the
Pila-Wilkie counting theorem in the context of the field
$\C\llp t\rrp$. We make two contributions in this direction. First, we
show that the set under consideration is indeed finite, so that the
counting problem is well-posed. Second, we produce examples of
analytic sets where the number of such polynomial curves grows
arbitrarily fast as a function of $r$. In other words, no analog of
the Pila-Wilkie counting theorem can be expected in this context.  We
also give a variant of the finiteness result over $\Q_p^{\rm unram}$,
the maximal unramified field extension of $\Q_p$. These results are
presented in Section~\ref{sec:finiteness}.

It is easy to explain intuitively why it may be unreasonable to expect
a Pila-Wilkie type counting result over the field $\C\llp t\rrp$. The
basic idea behind the Bombieri-Pila method and its subsequent
generalization in the work of Pila-Wilkie is that after making a
suitable parametrization and cutting the domain into balls of radius
$cB^{-\varepsilon}$, the rational points in each ball can be
interpolated by a hypersurface of degree $d=d(\e)$. A similar strategy
has been made to work in \cite{CCL-PW} for the field $\Q_p$. On the
other hand, in $\C\llp t\rrp$ the valuation field is infinite, and it
is simply not possible to subdivide a ball of radius $1$ into any
finite collection of smaller balls. The entire approach, it appears,
is doomed to fail --- and this is borne out by our counterexamples.

\subsection{Wilkie's conjecture}

The finiteness result, and the impossibility of Pila-Wilkie type
bounds in general, serves as double motivation to search for a
framework with additional control where some results in the spirit of
the counting theorem can still be obtained. A potent source of
intuition in this direction is the Wilkie conjecture. This prominent
conjecture due to Wilkie states that for certain natural o-minimal
structures (originally $\R_{\exp}$ in Wilkie's formulation) one can
sharpen the bound $c(\e) B^\e$ to some polynomial in $\log B$. Various
special cases of the Wilkie conjecture have been established for sets
defined using Pfaffian functions: either in small dimensions
\cite{CPW,JT,Pila:Mild}, or for general sets definable in the class of
``holomorphic-Pfaffian'' functions \cite{me:rest-wilkie}.

Given that in $\C\llp t\rrp$ we are unable to subdivide the unit ball
into any number of smaller balls, the only hope seems to be to show
that for some suitable degree $d$, the rational points can all be
approximated \emph{without any subdivision}, i.e. using the single
unit ball. According to work on the Wilkie conjecture in the
archimedean context, it is known that using hypersurfaces of degree
$d=(\log B)^\alpha$ (rather than $d=d(\e)$), it is in fact possible to
interpolate all rational points using $(\log B)^\beta$ balls (rather
than $cB^{-\varepsilon}$). In some cases, even $O(1)$ balls
suffice. It therefore appears at least potentially plausible that in
the more restrictive context of the Wilkie conjecture, the point
counting argument can be salvaged.

Following this intuition, we introduce the Pfaffian and Noetherian
functions into the non-archimedean picture.  In the second part of the
paper with Sections~\ref{sec:wilkie-results} -- \ref{sec:wilkie-proofs} we consider an
analog of the Wilkie conjecture, with proof techniques which are independent of Section \ref{sec:finiteness}. Namely, we restrict attention to the
class of germs of sets defined by \emph{Pfaffian} or \emph{Noetherian}
equations over the field of convergent Laurent series
$\C(\!\{t\}\!)$. Many functions of interest in the classical applications
of the Pila-Wilkie theorem (e.g.~abelian functions, modular functions,
period integrals) fall within these classes, and this therefore seems
like a natural context in which to pursue non-archimedean counting
theorems. By the general philosophy of the Wilkie conjecture one
expects sharper, even polynomial in $r$, bounds for the counting
problem on such sets. Surprisingly, we show that despite the failure
of the Pila-Wilkie counting theorem in this context, the analog of the
Wilkie conjecture does in fact hold for arbitrary Pfaffian, or even
Noetherian, varieties. Specifically, we show that the subdivision step
can be completely avoided in this case, and a single hypersurface of
degree polynomial in $\log B$ can indeed be used to interpolate all
rational points of height $B$ in the unit ball.
We remark that  this  second part of the paper can be read independently of Section~\ref{sec:finiteness}.

\subsection{Main ideas}

We interpret systems of equations over $\C(\!\{t\}\!)$ as one-parameter
deformations of systems over $\C$. A crucial technical difference
arises when comparing this local context to the archimedean one. In
the archimedean context, all known cases of the Wilkie conjecture rely
in a crucial way on Khovanskii's B\'ezout type bounds for Pfaffian
functions over the reals \cite{Khov}, which gives bounds for the
number of solutions of systems of Pfaffian equations in terms of their
degrees. This theory is \emph{purely real}, and generally gives bounds
only for real solutions of systems of equations. On the other hand, in
the local $\C(\!\{t\}\!)$-context, general bounds are in fact available
for \emph{complex}, rather than real, solutions. Indeed, Gabrielov
established general bounds for the number of solutions of such
complex-analytic deformations of Pfaffian functions in
\cite{Gab:MultPfaffian}. Moreover, under a small technical
restriction, similar results have been established in the class of
Noetherian functions by Binyamini and Novikov in
\cite{me:deflicity}. Nothing approaching such general results is known
in the archimedean situation. Using these tools, it is at least
plausible to expect that one can treat the case of general Pfaffian or
even Noetherian functions by the complex-analytic methods introduced
in \cite{me:rest-wilkie}.

In Sections~\ref{sec:weierstrass-polydiscs}--~\ref{sec:wilkie-proofs}
we carry out this program. We introduce a local analog of the
Weierstrass polydiscs used in \cite{me:rest-wilkie}, and apply the
results of \cite{Gab:MultPfaffian,me:deflicity} to show that these can
be constructed with appropriate control over complexity for Pfaffian
and Noetherian varieties. It is pleasantly surprising that in this
local context we can achieve this in full generality, for both
Pfaffian and Noetherian functions, whereas the corresponding results
in the archimedean context are currently far more restricted in
scope. Let us finally mention that this seems to be the first study of
Pfaffian and Noetherian functions in a non-archimedean context, as far
as we can see. We hope that this will open the way for the study of
Pfaffian and Noetherian functions over $\Q_p$ instead of
$\C\llp t \rrp$.


\section{Finiteness results}
\label{sec:finiteness}

In this section we state and prove our finiteness result for rational
points of bounded height (i.e.~with coordinates which are polynomials
in $\C[t]$, of bounded degree) on the transcendental part of analytic
definable sets (see Theorem \ref{thm:finite}), and, we show that these
numbers can grow arbitrarily fast with the degree (see Proposition
\ref{thm:fast-in-r}). At the end of Section \ref{sec:finiteness}, we adapt the finiteness result to the mixed characteristic case.



Let us make this all very precise. In this section we write $K$ for
$\C\llp t \rrp$.
We recall some of the notions for $K=\C\llp t \rrp$
from \cite{CCL-PW}, analogous to the corresponding real notions. For a
set $X\subset K^n$, let $X^{\rm alg}$ be the union of all
semi-algebraic sets $C\subset X$ which are of constant local dimension
$1$. Here, semi-algebraic means
definable with constants from $K$ in the language of valued fields,
with symbols $+,-,\cdot,|$,
where $x|y$ holds for $(x,y)$ in $K^2$ if and only if
$y$ lies in $x\cO_K$  and where  $\cO_K:= \C[[ t ]]$ is the valuation ring of $K$.
The local dimension of a nonempty
semi-algebraic set $C\subset K^n$ at $x\in C$ is defined to be the
maximal integer $m\geq 0$ such that for all sufficiently small
semi-algebraic open neighborhoods $U$ of $x$ there is a semi-algebraic
function $U\cap C\to K^m$ whose range has nonempty interior in $K^m$.
Correspondingly, the transcendental part $X^{\rm trans}$ of $X$ is
defined as $X\setminus X^{\rm alg}$.

As replacement for the o-minimality condition in \cite{PW}, we will
impose a form of analyticity on $X$, as follows. For each integer
$n\geq 0$, let $\cO_K\langle x_1,\ldots,x_n\rangle$ be the $t$-adic
completion of $\cO_K[x_1,\ldots,x_n]$ inside
$\cO_K[[x_1,\ldots,x_n]]$, for the Gauss norm. Note that
$\cO_K\langle x_1,\ldots,x_n\rangle$ consists of power series
$\sum_{i\in\N^n} a_i x^i$, in multi-index notation, with
$a_i\in \cO_K$ and such that the $t$-adic norm $|a_i|$ of $a_i$ goes
to zero when $i_1+\ldots + i_n$ goes to $+\infty$. Here, the $t$-adic
norm $|x|$ of nonzero $x\in K$ is defined as $e^{-\ord x}$ where
$\ord x$ is the $t$-adic valuation of $x$, namely, the largest integer
$n$ such that $x/t^n$ lies in $\cO_K$, the $t$-adic norm of $0$ is
defined to be $0$, and for $x\in K^n$, one defines $|x|$ as the
maximum of the $|x_i|$ for $i=1,\ldots,n$. For $f$ in
$\cO_K\langle x_1,\ldots,x_n\rangle$, write $f^|$ for the restricted
analytic function associated to $f$, namely, the function $K^n\to K$
sending $z\in \cO_K^n$ to the evaluation $f(z)$ of $f$ at $z$ (namely,
the limit for the $t$-adic topology over $s>0$ of the partial sums
$\sum_{i,\ \max_j i_j < s} a_i z^i$), and sending the remaining $z$ to
$0$.  Let $\cL_{\rm an}^K$ be the language containing the language of
valued fields and, for each $f$ in
$\cO_K\langle x_1,\ldots,x_n\rangle$ for any $n\geq 0$, a function
symbol for the restricted analytic function $f^|$ associated to $f$.

Finally, we follow \cite{CCL-PW} also for the notion of integral
points of bounded height on subsets of $K^n$, which we now recall.
For $r\geq 1$, denote by $\cO_{K}(r)$ the subset of $\cO_K$ consisting
of polynomials $\sum_{i=0}^{r-1} a_i t^i$ with $a_i$ in $\C$ and with
degree less than $r$ (in the variable $t$).  For any subset
$X\subset K^n$ and any $r\geq 1$, write $X(r)$ for the intersection of
$X$ with $(\cO_{K}(r))^n$.

We can now state the first main result of this paper, addressing a question
left open in Section 5.5 of \cite{CCL-PW} (see the partial result,
Proposition 5.5.1 of \cite{CCL-PW}).

\begin{Thm}[Finiteness]\label{thm:finite}
Let $X\subset K^n$ be $\cL_{\rm an}^K$-definable. Then, for each $r>0$,
$$
(X^{\rm trans})(r)
$$
is a finite set.
\end{Thm}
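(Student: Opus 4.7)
Identify $\cO_K(r)^n$ with $\C^{nr}$ via the natural bijection $\phi_r$ sending coefficient data $(a_{i,j})_{1\le i\le n,\,0\le j<r}$ to the polynomial tuple $\bigl(\sum_j a_{i,j}t^j\bigr)_i$, and set $Y:=\phi_r^{-1}(X)\subset \C^{nr}$. The crucial technical step is to show that $Y$ is a \emph{constructible} subset of $\C^{nr}$ in the classical sense. To do so, I would invoke quantifier elimination for $\cL_{\rm an}^K$ in a suitable extended language (with restricted division, in the style of Cluckers--Lipshitz), reducing to atomic conditions of the form $f(x)=0$, $v(f(x))\le v(g(x))$, or $f(x)\mid g(x)$ with $f,g\in \cO_K\langle x_1,\dots,x_n\rangle$. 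Substituting $x_i=\sum_{j<r} a_{i,j}t^j$ expands $f(x)$ as a power series $\sum_{m\ge 0} c_m(a)\,t^m$; because the Taylor coefficients of $f$ tend to $0$ in the $t$-adic norm and the $a_{i,j}$ are $\C$-valued, only finitely many of them contribute to each $c_m(a)$, so every $c_m$ is a polynomial in the $a_{i,j}$ with $\C$-coefficients. Hence each atomic condition pulls back to a countable system of polynomial equalities/inequalities, which by Noetherianity of $\C[a_{i,j}]$ cuts out a constructible subset of $\C^{nr}$; Boolean combinations preserve constructibility.

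Assume for contradiction that $(X^{\rm trans})(r)$ is infinite, and put $Y^{\rm trans} := \phi_r^{-1}(X^{\rm trans})\subseteq Y$. Then $Y^{\rm trans}$ is an infinite constructible subset of $\C^{nr}$, so its Zariski closure has a positive-dimensional component in which $Y^{\rm trans}$ is Zariski dense; cutting this component by a generic pencil of hyperplanes yields an irreducible algebraic curve $V\subset \C^{nr}$ with $V\setminus Y^{\rm trans}$ finite. Since $\phi_r$ is injective, $\phi_r(V)\cap X^{\rm trans}$ is an infinite subset of $X$.

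Now I would apply cell decomposition for $\cL_{\rm an}^K$-definable sets to write $X=\bigsqcup_i D_i$ as a finite disjoint union of semi-algebraic cells $D_i\subset K^n$, each of constant local dimension. By the pigeonhole principle some $D_i$ contains infinitely many points of $\phi_r(V)\cap X^{\rm trans}$; since a nonempty semi-algebraic set of dimension $0$ is finite, this forces $\dim D_i\ge 1$. One then checks $D_i\subseteq X^{\rm alg}$: if $\dim D_i=1$, the witnessing set is $D_i$ itself; in general $D_i$ fibres over a lower-dimensional base so that through each $p\in D_i$ runs a one-dimensional semi-algebraic subset of $X$ of constant local dimension $1$ at $p$, witnessing $p\in X^{\rm alg}$. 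But $D_i\subseteq X^{\rm alg}$ is incompatible with $D_i$ meeting $X^{\rm trans}$ in infinitely many points, and this contradiction completes the proof.

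The main obstacle is the constructibility step in the first paragraph: one must translate an arbitrary $\cL_{\rm an}^K$-definable subset of $K^n$ into a classical constructible subset of the coefficient space $\C^{nr}$, which requires a precise quantifier-elimination input together with careful control of how the $t$-adic convergence of restricted analytic functions interacts with polynomial substitution. Once this translation is available, the extraction of an algebraic curve in $\C^{nr}$ and the ensuing cell-decomposition argument over $K$ are essentially formal.
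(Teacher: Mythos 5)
Your first paragraph is essentially the paper's Proposition~\ref{prop:constr} and is sound in outline (the paper is more careful about terms involving the inversion symbol, running an induction on the number of occurrences of $(\cdot)^{-1}$, but this is a technicality you explicitly flag). The trouble is in the second and third paragraphs.

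\medskip
\noindent\emph{Circularity in the curve extraction.} You assert that $Y^{\rm trans}:=\phi_r^{-1}(X^{\rm trans})$ is constructible. But $X^{\rm trans}=X\setminus X^{\rm alg}$, and $X^{\rm alg}$ is a union over \emph{all} semi-algebraic subsets of $X$ of constant local dimension $1$ --- this is not a first-order ($\cL_{\rm an}^K$-)definable condition, so the constructibility argument you ran for $Y=\phi_r^{-1}(X)$ does not apply to $Y^{\rm trans}$. Without constructibility the next step fails outright: an infinite but non-constructible subset of $\C^{nr}$ need not be cofinite on any algebraic curve (think of $\{(n,2^n):n\in\N\}\subset\C^2$). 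Worse, the theorem you are trying to prove is exactly what would make $Y^{\rm trans}$ finite and hence constructible, so the assumption is effectively circular.

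\medskip
\noindent\emph{Wrong cells in the decomposition.} Cell decomposition for $\cL_{\rm an}^K$-definable sets produces \emph{subanalytic} cells, not semi-algebraic ones; an $\cL_{\rm an}^K$-definable set is generally not a finite union of semi-algebraic cells. Since $X^{\rm alg}$ is defined using \emph{semi-algebraic} curves of constant local dimension $1$, a subanalytic $1$-dimensional cell $D_i$ does not witness membership in $X^{\rm alg}$, so the conclusion ``$D_i\subseteq X^{\rm alg}$ contradicts $D_i$ meeting $X^{\rm trans}$'' collapses. This is precisely where genuine transcendental content can hide.

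\medskip
\noindent\emph{How the paper avoids both problems.} The paper never passes through $X^{\rm trans}$ on the constructible side. It proves $X(r)$ constructible (your first paragraph), then inducts on $\dim X(r)$: for $\dim X(r)=\ell>0$, it covers $X(r)\subset\C^{rn}$ (away from a finite set) by an algebraic family of algebraic curves $C_v$, and observes that the image $S_v=p(C_v(\cO_K))$ under the coefficient-to-polynomial map $p:\cO_K^{rn}\to\cO_K^n$ is a \emph{semi-algebraic} subset of $K^n$ of dimension $1$. The locus $S_v'\subset S_v\cap X$ where $S_v\cap X$ has local dimension $1$ is then semi-algebraic of constant local dimension $1$, hence lies in $X^{\rm alg}$. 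Removing $\bigcup_v S_v'$ from $X$ yields a definable $X'$ with $X^{\rm trans}(r)\subset X'^{\rm trans}(r)$ and $\dim X'(r)<\ell$, and induction finishes. The crucial idea you are missing is that semi-algebraicity over $K$ is produced by lifting algebraic curves from the complex coefficient space $\C^{rn}$ via $p$, not by decomposing the definable set $X$ itself into cells.
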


Of course one would like to bound $\# X^{\rm trans}(r)$ when $r$
grows. However, without extra information about the geometry of $X$,
it is hard to bound the number of points in $X^{\rm trans}(r)$, as
$X^{\rm trans}(r)$ can grow arbitrarily fast with $r$ by the following
result.
\begin{Prop}\label{thm:fast-in-r}
  For any sequence of positive integers $N_r$ for $r>0$, there is an
  $\cL_{\rm an}^K$-definable set $X\subset K^2$ such that
  $$
  N_r < \#(X^{\rm trans})(r) .
  $$
  Furthermore, for $X$ one can even take the graph of a function
  $f:\cO_K\to\cO_K$ given by a power series in
  $\cO_K\langle  x \rangle $, in one variable $x$.
\end{Prop}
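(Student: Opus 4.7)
I take $X$ to be the graph of an explicitly constructed $f\in\cO_K\langle x\rangle$, engineered to be (i) transcendental over $K(x)$, and (ii) to take values of $t$-degree less than $r$ at many positive integers. Under (i), any semi-algebraic $C\subset X$ of constant local dimension $1$ would project bijectively onto a semi-algebraic subset of $\cO_K$ of local dimension $1$, hence containing an open ball $B$ on which $f$ coincides with a rational function; Strassmann's theorem and uniqueness in the Tate algebra $\cO_K\langle x\rangle$ would then force this coincidence to hold globally, contradicting (i). Therefore $X^{\rm alg}=\emptyset$, so $(X^{\rm trans})(r)=X(r)$, and it is enough to make $\#X(r)>N_r$.

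\textbf{Construction and counting.} Pick a strictly increasing sequence $(m_n)_{n\ge 1}$ of positive integers satisfying $m_n>N_n$ together with the super-geometric growth condition $m_{n+1}\ge 2m_n+1$; the latter condition will only be used in the transcendence step. Set
\[
g_n(x):=\prod_{j=1}^{m_n}(x-j)\in\Z[x],\qquad f(x):=\sum_{n=1}^{\infty}t^{n}g_n(x).
\]
Each $g_n$ has integer coefficients and therefore Gauss norm $1$, so $\|t^{n}g_n\|=|t|^{n}\to 0$ and $f\in\cO_K\langle x\rangle$. For any integer $k$ with $1\le k\le m_r$ we have $g_n(k)=0$ for every $n\ge r$ (since $m_n\ge m_r\ge k$), whence $f(k)=\sum_{n<r}t^n g_n(k)\in\cO_K(r)$. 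The pair $(k,f(k))$ therefore lies in $X(r)$, producing at least $m_r>N_r$ points.

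\textbf{The main obstacle: transcendence of $f$.} What remains is to verify that the $f$ above is transcendental over $K(x)$. Suppose, for contradiction, that a nontrivial relation $\sum_{i=0}^{d}P_i(x,t)\,f^i=0$ holds with $P_i\in\cO_K[x]$, and expand each $P_i=\sum_j P_{i,j}(x)\,t^j$ with $P_{i,j}\in\C[x]$. Examining the coefficient of $t^{N}$ for $N\gg 0$ and using the super-geometric growth of $(m_n)$, the $t^{N}$-coefficient of $f^{i}=(\sum_n t^{n}g_n)^{i}$ has $x$-degree dominated by a single ``spike'' configuration in which one index is close to $N$ and the others equal $1$, giving $x$-degree $\sim m_{N-i+1}$. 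Consequently the summand $P_i f^{i}$ with the smallest value of $i+\ord_t P_i$ contributes a strictly dominant, uncancellable leading $x$-term, which forces the corresponding leading coefficient $P_{i,J_i}$ to vanish; iterating this peeling eliminates every $P_i$ and yields the desired contradiction. This is essentially a non-archimedean analogue of the Mahler transcendence method for lacunary power series, and is the only point at which the super-geometric growth of $m_n$ is used. Once $f$ is known to be transcendental, the first paragraph completes the proof.
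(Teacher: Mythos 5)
Your counting step is correct and is essentially a stripped-down version of the paper's construction. The paper takes $f(x)=\sum_{i>0}t^i P_{N_i}(x)$ with $P_k(x)=x^k\prod_{j=1}^k(x-j)$, while you drop the factor $x^k$ and take $g_n(x)=\prod_{j=1}^{m_n}(x-j)$; both choices vanish at $1,\dots,m_r$ for $n\ge r$, so both produce more than $N_r$ points of height $<r$ on the graph. Your reduction ``$f$ transcendental $\Rightarrow X^{\alg}=\emptyset$'' is also the right idea and matches what the paper implicitly uses (though ``coincides with a rational function'' is imprecise; one should say $f$ would satisfy a nontrivial algebraic relation on some ball, whence everywhere by analytic continuation).

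The genuine gap is in the transcendence step, and it is caused precisely by dropping the $x^k$ factor. The paper's $P_k$ has $x$-support contained in $[k,2k]$, so $\supp_x f\subset\bigcup_i[N_i,2N_i]$ is lacunary; this makes the transcendence argument one line: if $f$ satisfied a polynomial relation of degree $d$, its truncations of degree $\le 2N_{i-1}$ would be roots of the same relation for all large $i$ (B\'ezout on the order of contact at $x=0$), giving infinitely many roots. Your $g_n$ has full $x$-support $[0,m_n]$, so no such gap exists, and you fall back on a direct Mahler-style ``dominant spike'' argument. That argument, as written, does not close. When you expand a hypothetical relation $\sum_i P_i f^i=0$ and examine the coefficient of $t^N$, the $x$-degree of a summand $P_{i,j}\,[t^{N-j}]f^i$ is governed by $m_{N-(i+j)+1}+(i-1)m_1+\deg_x P_{i,j}$, and the maximum over $j$ is at $j=J_i:=\ord_t P_i$. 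If the minimum of $i+J_i$ over $i\ge1$ with $P_i\ne0$ is attained by a \emph{single} index, your conclusion is immediate (the leading $x$-coefficient of that $P_{i,J_i}$ would have to vanish, contradiction); but if it is attained by $i_1<i_2$ (e.g.\ $J_{i_1}-J_{i_2}=i_2-i_1$) and moreover $\deg_x P_{i_1,J_{i_1}}-\deg_x P_{i_2,J_{i_2}}=(i_2-i_1)m_1$, then the two ``spike'' terms land at the same $x$-degree, and since $g_{N-J^*+1}\cdot g_1^{\,i-1}$ is monic the only constraint you extract is $\mathrm{lc}_x P_{i_1,J_{i_1}}+\mathrm{lc}_x P_{i_2,J_{i_2}}=0$ — an identity independent of $N$ that yields no contradiction and no ``peeling.'' Your sketch asserts the leading term is ``strictly dominant, uncancellable'' without addressing this tie, and I also do not see that the mild condition $m_{n+1}\ge 2m_n+1$ rules it out. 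To fix this you would need a substantially more careful cancellation analysis; but the easier repair is to restore the paper's $x^k$ factor, which makes the transcendence essentially free.
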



The proof of Theorem \ref{thm:finite} uses a quantifier elimination
result in a certain expansion $\cL_{{\rm an}}^\ac$ of $\cL_{\rm an}^K$
and a reduction to Zariski constructible conditions on tuples of
complex polynomials in $t$ of degree less than $r$ (some similar
techniques appear in \cite{BN1}, \cite{me:rest-wilkie},
\cite{CCL-PW}). We start with developing these ideas, summarized in
Proposition \ref{prop:constr} which gives that $X(r)$ is a
constructible set. Let us mention that languages and formulas are
always first order in this paper (as is typical in model theory).

Consider the language $\cL_{\rm an}^\ac$ with three sorts
(respectively valued field, residue field and value group), containing
$\cL_{\rm an}^K$ together with the field inverse $(\cdot)^{-1}$ extended by zero on zero for the valued field sort, the ring language on the
residue field, the Presburger language on the value group, an angular
component map $\ac$ sending nonzero $x$ in $K$ to the coefficient of
the leading term in $t$ of $x$ (namely to $xt^{-\ord x}\bmod (t)$),
and sending zero to zero, and the valuation map from $K^\times$ to the
value group.  Clearly any $\cL_{\rm an}^K$-definable set is also
$\cL_{\rm an}^\ac$-definable.

By the quantifier elimination statement of Theorem 4.2 of \cite{CLR}
together with quantifier elimination in the Presburger language and
the Chevalley-Tarski Theorem, any $\cL_{\rm an}^\ac$-definable set $X$
is given by a quantifier free formula in the language
$\cL_{\rm an}^\ac$. (This also follows from Theorem (3.9) of
\cite{vdDAx}, or, Theorem 6.3.7 and Example 4.4(1) from \cite{CLip}.)

We can now state and prove the reduction to Zariski constructible conditions.

\begin{Prop}\label{prop:constr}
  Let $X\subset K^n$ be $\cL_{\rm an}^\ac$-definable and let $r>0$ be
  an integer. Then $X(r)$ is a Zariski constructible subset of
  $\C^{rn}$, where we identify $(\cO_{K}(r))^n$ with $\C^{rn}$ by
  mapping a complex polynomial $\sum_{i=0}^{r-1}a_i t ^i$ in
  $\cO_{K}(r)$ to the tuple $(a_i)_{i=0}^{r-1}$ in $\C^r$.
\end{Prop}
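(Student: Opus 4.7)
By the quantifier elimination in $\cL_{\rm an}^\ac$ cited above, $X$ is cut out by a quantifier-free $\cL_{\rm an}^\ac$-formula $\phi(x_1,\ldots,x_n)$. Zariski constructible subsets of $\C^{rn}$ form a Boolean algebra, so it suffices to show that for each atomic sub-formula $\psi$ of $\phi$, the set $\{z \in (\cO_K(r))^n : \psi(z)\}$ is constructible in $\C^{rn}$ under the identification $\sum_{j<r} a_{ij} t^j \leftrightarrow (a_{ij})$.

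\textbf{Coefficients of terms are rational functions of the $a_{ij}$.} By induction on the complexity of a valued field term $\tau(x)$ of $\cL_{\rm an}^\ac$, one verifies that for $z \in (\cO_K(r))^n$ and each $k \in \Z$, the coefficient of $t^k$ in $\tau(z) \in K$ is a rational function of the $a_{ij}$ with complex coefficients, defined on a suitable Zariski open stratum of $\C^{rn}$. The only nontrivial inductive step is the evaluation of a restricted analytic function $f = \sum_\alpha c_\alpha x^\alpha$ with $c_\alpha \in \cO_K$ and $|c_\alpha| \to 0$: in $f(z) = \sum_\alpha c_\alpha z^\alpha$, the coefficient of $t^k$ receives contributions only from the finitely many $\alpha$ with $\ord(c_\alpha) \leq k$, and each such contribution is polynomial in the $a_{ij}$. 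Multiplicative inversion is handled by stratifying $\C^{rn}$ into the locus where $\tau(z) = 0$ (giving $\tau(z)^{-1} = 0$) and its complement, where one clears denominators.

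\textbf{Atomic formulas give constructible sets.} A valued-field equality $\tau_1(z) = \tau_2(z)$ amounts to the vanishing of every coefficient of $\tau_1(z) - \tau_2(z)$; on each stratum these become polynomial equations in the $a_{ij}$, and Noetherianity of $\C[a_{ij}]$ cuts the infinite system down to finitely many equations, yielding a Zariski closed locus. A residue-field atomic formula is a polynomial equation in finitely many quantities of the form $\ac(\tau(z))$: stratifying further by $\ord(\tau(z)) = m$, each $\ac(\tau(z))$ is identified with the corresponding coefficient of $\tau(z)$, a rational function of the $a_{ij}$. A value-group atomic formula is a Presburger condition on finitely many values $\ord(\tau_\ell(z))$; since $\ord(\tau(z)) = m$ is locally closed (the first $m$ coefficients vanishing, the $m$-th not), and since by Noetherianity the descending chain $\{z : \ord(\tau(z)) \geq m\}$ stabilizes at some $m^\ast$ beyond which $\ord(\tau(z)) > m^\ast$ forces $\tau(z) = 0$, only finitely many finite values of $\ord(\tau(z))$ are relevant. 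The Presburger condition thus reduces to a finite Boolean combination of locally closed strata.

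\textbf{Main obstacle.} The delicate point is not any individual case but keeping the case analysis consistent across the three sorts and the finitely many terms appearing in $\phi$: one must stratify $\C^{rn}$ simultaneously by the vanishing/non-vanishing and by the order of every such term. Since only finitely many terms appear and each induces only finitely many strata, the resulting partition of $\C^{rn}$ is finite and into locally closed pieces. Assembling on each piece the Boolean combination dictated by $\phi$ exhibits $X(r)$ as a finite Boolean combination of Zariski locally closed subsets of $\C^{rn}$, hence as a constructible set.
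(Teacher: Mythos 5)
Your approach is essentially the same as the paper's: apply the quantifier elimination for $\cL_{\rm an}^\ac$, reduce to atomic formulas, show that the $t$-adic coefficients of a term evaluated on $(\cO_K(r))^n$ depend polynomially (or rationally) on the $a_{ij}$ on a finite stratification of $\C^{rn}$, and invoke Noetherianity of $\C[a_{ij}]$ to replace the a priori infinite coefficient conditions by finitely many. This is exactly the paper's strategy, so I regard the proof as correct in outline.

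The one place where your write-up is noticeably thinner than the paper's proof is the handling of field inversion and its interaction with the restricted analytic functions $f^|$. You say ``one clears denominators'' on the complement of the vanishing locus, but there are two subtleties hiding there which the paper deals with explicitly and which you should not gloss over. First, on a stratum where $\ord g_0(x)=\nu$, the element $g_0(x)^{-1}$ has order $-\nu$, which may be negative, so $g_0(x)^{-1}\notin\cO_K$ and any restricted analytic function applied to it returns $0$ by definition; your stratification must record this. Second, even when $g_0(x)^{-1}\in\cO_K$, its $t$-adic coefficients are not given by clearing denominators in any naive sense --- they arise from the geometric series expansion of $1/(1-m)$ with $m=1-\xi t^{-\nu}g_0(x)$, which the paper packages as an explicit $\cL_{\rm an}^\ac$-term $h(x,\xi)$ without inversion (using an auxiliary variable $\xi$ for the leading coefficient); this converts the inversion case into the inversion-free case and makes the induction go through cleanly. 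Your argument is recoverable along these lines, but as stated it leaves the reader to reconstruct the key device (the auxiliary variable and power-series expansion) that actually makes ``each coefficient is a rational function of the $a_{ij}$, and only finitely many matter on each stratum'' true through compositions of analytic functions and inversions.

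One further small point: the paper additionally proves, by the same induction, a uniform bound $N$ on the orders that can occur (the ``additional statement''); you obtain the same effect separately via a stabilizing descending chain for each term. The endpoints agree, but the paper's formulation keeps this bookkeeping attached to the term induction, which is what makes the later inversion step (stratifying by $\ord g_0\in\{-N,\dots,N\}$) finite.
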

\begin{proof}[Proof of Proposition \ref{prop:constr}]
  By the mentioned quantifier elimination result for our structure
  with three sorts $K$, $\C$ and $\Z$ in the language
  $\cL_{\rm an}^\ac$, the set $X\subset K^n$ is given by a quantifier
  free formula $\varphi(x)$ in the language $\cL_{\rm an}^\ac$, with
  free variables $x$ running over $K^n$.  We proceed by induction on
  the number of occurrences of the function symbol $(\cdot)^{-1}$ in
  the quantifier free formula $\varphi(x)$.  By the form of quantifier
  free formulas (see Theorem 4.2 of \cite{CLR}), $\varphi(x)$ is
  equivalent to a finite Boolean combination of conditions on
  $x\in K^n$ of the form
  \begin{enumerate}[label = \textup{(\alph*)}, ref = (\alph*)]
  \item\label{con1} $\ac(f(x)) = 1$, 
  \item\label{con2} $f(x) = 0$, 
  \item\label{con3} $\ord(f(x))\geq 0$, 
  \item\label{con4} $\ord(f(x)) \equiv 0 \bmod \lambda$.
  \end{enumerate}
  for some $\cL_{\rm
    an}^\ac$-terms $f$ and some integers
  $\lambda>0$.  By the definition of
  $X(r)$, it is enough to prove the proposition when
  $\varphi$ itself has one of the above mentioned forms \ref{con1}, \ref{con2}, \ref{con3}, or \ref{con4}.
  Write $x\in (\cO_{K}(r))^n$ as $(\sum_{\ell=0}^{r-1}
  a_{j\ell}t^\ell)_{j=1}^n$ and write $f(x)$ as $\sum_{s\in\Z}
  f_{s}(a)t^s$, with $a=(a_{j\ell})_{j,\ell}$ and functions
  $f_{s}$ on $\C^{rn}$.

  First suppose that the term
  $f$ does not involve field inversion.  In this case we may suppose that
  $f$ is a finite composition of
  $K$-multiples of restricted analytic mappings, that is, $f= \hat
  f_k\circ \hat f_{k-1}\circ \ldots\circ \hat
  f_1(x)$, where each $\hat
  f_i$ is a map whose component functions are elements of $\cO_K\langle
  x\rangle\otimes_{\cO_K}
  K$, namely, for each $i$ there are restricted analytic functions $
  f_{ij}^|$ for $j=1,\ldots, a_i$ and $\lambda_{ij}\in
  K$ such that $\hat f_i(z)= (\lambda_{ij}
  f^|_{ij}(z))_{j}$, where $j=1,\ldots,a_i$, $z\in
  K^{a_{i-1}}$, and where $a_i$ is the arity of $\hat
  f_{i+1}$ if $i<k$ and with $a_k=1$. (Indeed, since $X(r)\subset
  \cO_K^n$, the global ring operations on
  $K$ are irrelevant in the presence of all restricted analytic functions). Write
  $g_i$ for $\hat f_i\circ \hat f_{i-1}\circ \ldots\circ \hat
  f_1(x)$ for each
  $i=1,\ldots,k$.  Let us finish this case by induction on
  $k$. 
  In fact we will prove at the same time an additional statement by induction on
  $k$: there is an integer $N>0$ such that for $x\in
  \cO_K(r)^n$, one either has $f(x)=0$ or $-N\leq \ord f(x) \leq
  N$, and, for any tuple of integers $\hat \nu = (\hat
  \nu_{ij})_{i,j}$, there are polynomials $p_{\hat \nu,
    s}$ such that $f_s(a)=p_{\hat \nu, s}(a)$ whenever $x\in
  \cO_K(r)^n$ satisfies $\ord g_{ij}(x) = \hat
  \nu_{ij}$ for all $i,j$ and where
  $a$ is still such that $x=(\sum_{\ell=0}^{r-1}
  a_{j\ell}t^\ell)_{j=1}^n$.

  If $k=1$, then clearly each of the $f_{s}$ 
  is a polynomial in the tuple $a$ (no need to specify $\hat \nu$),
  and, there is an integer $M>0$ such that $t^Mf $ lies in
  $\cO_K\langle x\rangle$.  By the Noetherianity of any polynomial
  ring in finitely many variables over $\C$, there exists $N>M$ such
  that $\ord f(x) > N$ implies $f(x)=0$ for $x\in (\cO_{K}(r))^n$.
  It is thus sufficient to show the constructibility in
  $x\in (\cO_{K}(r))^n$ of the following conditions:
  \begin{enumerate}[label = \textup{(\arabic*)}, ref = (\arabic*)]
  \item\label{con1k} $\ac(f(x)) = 1 \wedge
    \ord(f(x))=\mu$, 
  \item\label{con2k} $\ord (f(x))>N$, 
  \item\label{con3k} $\ord(f(x))=\mu$, 
  \end{enumerate}
  for integers $\mu$ with $-N \leq \mu \leq
  N$. Each of these cases is straightforward, for example, condition \ref{con1k} on $x\in
  (\cO_{K}(r))^n$ is equivalent to
$$
f_{\mu}(a) =1 \wedge_{s=-N}^{\mu-1} f_{s}(a)=0,
$$
which is clearly a constructible condition on $a\in\C^{nr}$ (where
$a$ corresponds to $x$ as above). Also
$N$ is as desired for the additional statement. This finishes the case that
$k=1$.

The case of general $k>1$ goes as follows.  By induction on
$k$ we have constructibility and the additional statement for
$g_{k-1}$ for some $N>0$ and all $\hat \nu$. Let us fix a tuple $\hat
\nu$ with $-N\leq \hat \nu_{ij}\leq
N$ for all $i$ and $j$ and a maximal consistent set
$W$ of conditions on $x\in (\cO_{K}(r))^n$ of the form $\ord g_{ij}(x)
= \hat
\nu_{ij}$ or of the form $g_{ij}(x)=0$, where
$i<k$. Now the case that $x\in
(\cO_{K}(r))^n$ satisfies the conjunction of
$\varphi(x)$ with the conditions from
$W$ can be finished as the argument for
$k=1$ (including the additional statement), by using again the Noetherianity of polynomial rings.  This finishes the case that the term
$f$ does not involve the function symbol for field inversion.

Next, suppose that $f$ contains a sub-term $g$ which is of the form
$g_0^{-1}$ for some term $g_0$ which does not involve field inversion.
Choose $N>0$ for $g_0$ as given by the base case of our induction and
its additional statement, as shown above.
For each integer $\nu$ with $-N \leq \nu \leq N$, consider the
condition $B_{\nu} (x,\xi)$ on $(x,\xi)\in K^{n+1}$ being the
conjunction of $\ord(\xi)=0$ with
$$
\ord g_0(x) = \nu\wedge \ac(\xi g_0(x)) = 1
$$
and with $\varphi(x)$, where we recall that $\varphi$ is the condition
\ref{con1}, \ref{con2}, \ref{con3}, or
\ref{con4}. 

Note that there is an $\cL_{\rm an}^\ac$-term $h(x,\xi)$, with
$(x,\xi)$ running over $K^{n+1}$, which does not involve field
inversion and such that for $x\in \cO_K^{n}$ and
$\xi\in\C^\times \subset \cO_K^\times $ with
$\ord g_0(x) = \nu \wedge \ac(\xi g_0(x)) = 1$, we can write
$$
g(x) =\frac{1}{g_0(x)} = \xi t^{-\nu}\frac{1}{1-(1-\xi t^{-\nu} g_0(x))} =
h(x,\xi).  
$$
Indeed, $h$ comes from developing $1/(1-m)$ as a power series in $m$
running over the maximal ideal $t\cO_K$ of $\cO_K$, and evaluating at
$m=1-\xi t^{-\nu} g_0(x)$.
Let us now, inside $f$, replace the subterm $g(x)$ by
$h(x,\xi)$. Then, we see by our ongoing induction on the number of
occurrences of the function symbol $(\cdot)^{-1}$, that condition
$B_{\nu} (x,\xi)$ on $(x,\xi)\in\cO_K(r)^{n+1}$ yields a constructible
subset $A_\nu$ of $\C^{r(n+1)}$. Hence, also the set $A'_\nu$
consisting of $(x,\xi)$ in $A_\nu$ with moreover
$\xi\in\C^\times \times \{0\}^{r-1} \subset \C^r$ is constructible.
By the Chevalley-Tarski Theorem, the image $A''_\nu$ of $A'_\nu$ under
the coordinate projection to $\C^{nr}$ is also constructible, and,
$A''_\nu$ equals the set of $x\in \cO_K(r)^{n}$ satisfying
$\varphi(x)\wedge \ord g_0(x) = \nu$, where $\varphi$ is still
\ref{con1}, \ref{con2}, \ref{con3}, or \ref{con4}.

Finally, consider the condition $B^0(x)$ which is the conjunction of
$g_0(x)=0$ and the condition $\varphi'(x)$ obtained from $\varphi(x)$
by replacing the sub-term $g(x)$ by zero in the term $f(x)$.  By
induction on the number of occurrences of the function symbol
$(\cdot)^{-1}$, 
the condition $B^0(x)$ on $x\in\cO_K^{n}(r)$ is Zariski constructible
and the extra statement also holds.

By construction, for $x\in \cO_K^{n}(r)$, $x$ lies in $X(r)$ if and
only if $B^0(x)$ holds or $x\in A''_\nu$ for some $\nu$ with
$-N \leq \nu \leq N$. This finishes the proof of the proposition.
\end{proof}


\begin{Rem}
  In fact, we will prove Theorem \ref{thm:finite} more generally for
  any $\cL_{\rm an}^\ac$-definable set $X\subset
  K^n$. 
\end{Rem}

\begin{proof}[Proof of Theorem \ref{thm:finite}]
  We will prove the theorem for a subset $X\subset K^n$ which is
  $\cL_{\rm an}^\ac$-definable. Let $r>0$ be an integer.  By
  Proposition \ref{prop:constr}, $X(r)$ is a constructible subset of
  $\C^{rn}$.  We prove the finiteness of $(X^{\rm trans})(r)$ by
  induction on the dimension $\ell$ of $X(r)$. If $\ell=0$, there is
  nothing left to prove since $X(r)$ is finite in this case and since
  $(X^{\rm trans})(r)\subset X(r)$.
  If $\ell>0$, write $A$ for $X(r)$ and choose an algebraic family of
  algebraic (locally closed) curves $C_v\subset \AA_\C^{rn}$ for
  $v\in V\subset \AA_\C^s$ for some $s\geq 0$ such that the union of
  the sets $C_v(\C)$ over $v\in V(\C)$ equals $A(\C)\setminus F$,
  where $F$ is a finite set. Such a family clearly exists. For each
  $v\in V(\cO_K)$, let $S_v$ be the image of $C_v(\cO_K)$ under the
  map $p:\cO_K^{rn}\to \cO_K^n$ sending $(x_{jk })$ to
  $(\sum_{k =0}^{r-1} x_{jk }t^k)_{j=1}^n$. Clearly $S_v$ is
  semi-algebraic and of dimension $1$ for each $v\in V(\cO_K)$, since
  $S_v$ is infinite and equal to the image under a semi-algebraic
  function of an algebraic curve.  (See e.g.~Section 3 of
  \cite{Dries-dim}, or, the dimension theory of \cite{CLb} and Theorem
  6.3.7 from
  \cite{CLip}.) 
  For each $v\in V(\cO_K)$, let $S_v'$ be the subset of $S_v\cap X$
  consisting of $x$ such that $S_v\cap X$ is locally of dimension $1$
  at $x$. Let $X'$ be $X\setminus (\cup_{v\in V(\cO_K)} S_v')$. Then
  clearly $X'$ is $\cL_{\rm an}^\ac$-definable.  Moreover, by
  construction we have that $X'(r)$ is of dimension less than $\ell$
  (as constructible subset of $\C^{rn}$). Indeed, $C_v(\C)\cap X'(r)$
  is finite for any $v\in V(\C)$, since $(S_v\cap X)\setminus S'_v$ is
  finite for each $v\in V(\cO_K)$. (Here we have used that the subset
  of points $x$ in a semi-algebraic set $S$ of local dimension $0$ at
  $x$ is finite, see again \cite{Dries-dim} or \cite{CLb}.)
  Since clearly $X^{\rm trans}(r)$ is contained in
  $X'{}^{\rm trans}(r)$, we can replace $X$ by $X'$ and thus we are
  done by induction on $\ell$.
\end{proof}

\begin{proof}[Proof of Proposition \ref{thm:fast-in-r}]
  Let $N_k$ be a strictly increasing sequence and consider the
analytic function $\cO_K\to \cO_K$ given by the following
  converging power series $f$ in $K[[x]]$
  \begin{equation}
    f(x) = \sum_{i> 0} t^{i} P_{N_i}, \qquad \mbox{where } P_k = x^k \prod_{j=1}^k(x-j).
  \end{equation}
  Let $X\subset \cO_K^2$ denote the analytic set $y=f(x)$ for $x\in\cO_K$. For each $i>0$ and  $j=1,\ldots,N_i$, note that $f(j)$ is a polynomial in $t$ of degree less than $i$
  and therefore
  \begin{equation}\label{eq:Xr-growth}
    \#X(i)\ge N_i.
  \end{equation}
  On the other hand,
  \begin{equation}\label{eq:f-supp}
    \supp_x f(x) \subset \cup_i [N_i,2N_i] \quad\text{where}\quad \supp_x(\sum c_j x^j)=\{j:c_j\neq0\}.
  \end{equation}
  If we choose $N_i$ to grow sufficiently quickly (e.g.~if for every
  $d\in\N$ eventually $N_i>2dN_{i-1}$) then~\eqref{eq:f-supp} implies
  that $f(x)$ is transcendental. Indeed, if $f$ is algebraic of degree
  $d$ then by the B\'ezout theorem its order of contact with its
  $2N_{i-1}$-Taylor approximation should not exceed $2dN_{i-1}$.
  Choosing $N_i$ in this manner we obtain from~\eqref{eq:Xr-growth} the
  lower-bound  $\# X^\trans(i) = \#X(i) \ge N_i$, which finishes the proof.
%
%
%
\end{proof}


\subsection*{A mixed characteristic variant}\label{sec:Witt}

In this subsection, let $L$ be $\Q_p^{\rm unram}$, the maximal unramified
field extension of $\Q_p$ for some prime $p$.  Let
$\tau: \FF_p^{\rm alg } \to \cO_L$ be the Teichm\"uller lifting,
namely the unique multiplicative section of the natural projection map
$\cO_L\to \FF_p^{\rm alg }$ with $\cO_L$ the valuation ring of $L$,
and where $\FF_p^{\rm alg }$ is an algebraic closure of $\FF_p$.

We use analogous notation and definitions for $L$ as above for $K$,
with the following natural adaption to define $\cO_{L}(r)$.  For any
integer $r\geq 1$, denote by $\cO_{L}(r)$ the subset of $\cO_L$
consisting of elements of the form
$\sum_{i=0}^{r-1} \tau(a_i) p^i$. These elements can informally be
seen as polynomials in $p$ with coefficients in
$\tau(\FF_p^{\rm alg })$ and degree less than $r$. We identify
$\cO_{L}(r)$ with $(\FF_p^{\rm alg })^r$ by sending
$\sum_{i=0}^{r-1} \tau(a_i) p^i$ to the tuple $(a_i)_i$.

\begin{Thm}[Finiteness in mixed characteristic]\label{thm:finite:L}
  Let $X\subset L^n$ be $\cL_{\rm an}^L$-definable. Then, for each
  $r>0$, $X(r)$ is a constructible subset of $(\FF_p^{\rm alg })^{rn}$
  and
$$
(X^{\rm trans})(r)
$$
is a finite set.
\end{Thm}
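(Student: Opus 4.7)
The plan is to mirror the proof of Theorem~\ref{thm:finite} step by step, with the residue field $\C$ replaced by $\FF_p^{\rm alg}$ and the polynomial-in-$t$ expansion replaced by the Teichm\"uller expansion $x = \sum_i \tau(a_i) p^i$. First I would establish a mixed-characteristic analog of Proposition~\ref{prop:constr} asserting that $X(r)$ is a Zariski constructible subset of $(\FF_p^{\rm alg})^{rn}$ under the identification $\cO_L(r)^n \cong (\FF_p^{\rm alg})^{rn}$. Given this, the finiteness of $(X^{\rm trans})(r)$ follows by the same induction on the dimension of $X(r)$ used in the proof of Theorem~\ref{thm:finite}: one covers $X(r)$ up to a finite set by an algebraic family of curves $C_v \subset \AA^{rn}_{\FF_p^{\rm alg}}$, lifts the parameters $v$ to $V(\cO_L)$ via the Teichm\"uller map, forms the semi-algebraic images $S_v \subset \cO_L^n$ (which are of local dimension one by the dimension theory of \cite{CLb,CLip}), throws out the points where $S_v \cap X$ is locally one-dimensional, and iterates on the residual $\cL_{\rm an}^\ac$-definable set.

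To set up the constructibility statement, I would work in the three-sorted language $\cL_{\rm an}^\ac$ over $L$, consisting of $\cL_{\rm an}^L$ on the valued-field sort, the ring language on the residue field $\FF_p^{\rm alg}$, the Presburger language on the value group $\Z$, together with the valuation and an angular component $\ac$ sending nonzero $x$ to $(xp^{-\ord x}) \bmod p$. The required quantifier elimination in this analytic language is available from Ax--Kochen type results in the analytic setting (for instance Theorem~6.3.7 of \cite{CLip}, or the variants in \cite{CLR,vdDAx}), so one may assume $X$ is cut out by a Boolean combination of the basic conditions \ref{con1}--\ref{con4} on $\cL_{\rm an}^\ac$-terms $f$ and handle each condition in turn.

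The main obstacle, and the only substantive departure from the equal-characteristic argument, is that the ring operations on $\cO_L$ are no longer literally polynomial in the Teichm\"uller digits: the expansions
\[
\tau(a)+\tau(b) = \sum_i \tau(c_i) p^i, \qquad \tau(a)\tau(b) = \sum_i \tau(d_i) p^i
\]
are governed by the classical Witt-vector polynomial formulas, in which $c_i$ and $d_i$ depend polynomially on $a,b$ only after composing with powers of (inverse) Frobenius. Because $\FF_p^{\rm alg}$ is algebraically closed and perfect, Frobenius is a bijection and Zariski constructibility is preserved under these twists; hence for every term $f$ not involving field inversion, the Teichm\"uller digits of $f(x)$ still depend in a Zariski constructible fashion on the Teichm\"uller digits of $x$. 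The remainder of the argument for Proposition~\ref{prop:constr} then transfers essentially verbatim: Noetherianity of the polynomial ring $\FF_p^{\rm alg}[a]$ provides the uniform bound $N$ on $\ord f(x)$ whenever $f(x)\neq 0$, the induction on the composition depth $k$ proceeds identically, and the field-inversion step is handled by the same geometric-series expansion $1/g_0(x) = \xi p^{-\nu}/(1 - (1 - \xi p^{-\nu} g_0(x)))$, now with $p$ in place of $t$ in the maximal ideal of $\cO_L$. I expect the careful bookkeeping of the Frobenius twists --- verifying that each constructible set produced along the way remains truly constructible over $\FF_p^{\rm alg}$ --- to be the only genuinely new technical work involved, with everything else being cosmetic translation from the $\C\llp t\rrp$ case.
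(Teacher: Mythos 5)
Your outline follows the same two-step plan as the paper: first a mixed-characteristic analog of Proposition~\ref{prop:constr}, then the same dimension induction (covering $X(r)$ by a family of curves up to a finite set, lifting the curves to $\cO_L^{rn}$ via $\tau$ applied to the defining polynomials, discarding the locally one-dimensional semi-algebraic locus, and iterating). The observation about Witt-vector addition/multiplication being polynomial only after composing with Frobenius powers, and using perfectness of $\FF_p^{\rm alg}$ to recover constructibility, is correct and is indeed the kind of bookkeeping one must do.

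There is, however, a genuine gap in your setup of the quantifier elimination. You propose working in a three-sorted language with only the \emph{first-order} angular component $\ac_1(x) = p^{-\ord x}x \bmod p$ and invoke the same Ax--Kochen-type analytic QE results cited in the equal characteristic case. But in mixed characteristic this is not enough: carry phenomena mean that $\ac_1(x)$ and $\ac_1(y)$ together with $\ord x,\ord y$ do not determine $\ac_1(x+y)$, and definable conditions as basic as ``$x$ is a square'' in $\cO_L^\times$ (for $p=2$) are not determined by $\ac_1$. Consequently, a quantifier-free description of an $\cL_{\rm an}^L$-definable $X\subset L^n$ in the language you specify is not available. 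The paper remedies this precisely by adjoining the full family of \emph{higher-order} angular component maps $\ac_k\colon L\to\cO_L/p^k\cO_L$ (for all $k>0$) and identifying $\cO_L/p^k\cO_L$ with $(\FF_p^{\rm alg})^{k}$; only with this richer language does one have the needed quantifier elimination. This change is not merely cosmetic: once you carry the $\ac_k$ instead of $\ac_1$ through the argument, the Witt-polynomial formulas you already flag arise at that level, and your Frobenius-twist argument should then be applied there. Aside from this omission, your proposal matches the paper's intended proof.
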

\begin{proof}
  The proof is as for Proposition \ref{prop:constr} and Theorem
  \ref{thm:finite}, using the usual operations on Witt-vectors, with
  natural adaptations, in particular putting $p$ in the role of
  $t$. For the first part, one adapts the proof of Proposition
  \ref{prop:constr} by using also higher order angular component maps
  and identifying $\cO_L/p^k\cO_L$ with $(\FF_p^{\rm alg })^{k}$ in
  order to have quantifier elimination. A higher order angular
  component map is a map $L\to \cO_L/p^k\cO_L$ for some integer $k>0$
  sending nonzero $x$ to $p^{-\ord x}x\bmod p^k\cO_L$ and zero to
  zero.  
  For the second part, one repeats the proof of Theorem
  \ref{thm:finite} where one takes a lift of the family of curves
  $C_v\subset \AA_{\FF_p^{\rm alg } }^{rn}$ to a family of curves
  $C_v$ in $\AA_L^{rn}$ by applying $\tau$ to the coefficients of the
  defining polynomials.
\end{proof}

\section{Non-archimedean Wilkie type Conjecture for Pfaffian and Noetherian varieties}
\label{sec:wilkie-results}

Our goal is to prove an analog of the Wilkie
conjecture over the field of convergent Laurent series $\C(\!\{t\}\!)$ for
varieties defined using Pfaffian and Noetherian equations.  We begin
by recalling these notions. Denote by $\vx=(\vx_1,\ldots,\vx_n)$ a
system of coordinates on $\C^n$.

\begin{Def}[Pfaffian and Noetherian functions]\label{def:pfaff-noether}
  A \emph{Pfaffian chain} of order $\ell$ and degree $\alpha$ at
  $(\C^n,0)$ is a sequence of holomorphic functions
  $\phi_1,\ldots,\phi_\ell:(\C^n,0)\to\C$ satisfying a triangular
  system of differential equations
  \begin{equation}\label{eq:pfaff}
    \d \phi_j = \sum_{i=1}^n P_{i,j}(\vx,\phi_1(\vx),\ldots,\phi_j(\vx))\d \vx_i, \qquad j=1,\ldots,\ell
  \end{equation}
  where $P_{i,j}$ are polynomials of degrees not exceeding $\alpha$.

  A Noetherian chain is defined similarly by dropping the
  triangularity condition, i.e. replacing~\eqref{eq:pfaff} by
  \begin{equation}\label{eq:noetherian}
    \d \phi_j = \sum_{i=1}^n P_{i,j}(\vx,\phi_1(\vx),\ldots,\phi_\ell(\vx))\d \vx_i, \qquad j=1,\ldots,\ell
  \end{equation}
\end{Def}

Given such a Pfaffian (resp. Noetherian) chain, a germ
$f:(\C^n,0)\to\C$ of the form
$f(\vx)=P(\vx,\phi_1(\vx),\ldots,\phi_\ell(\vx))$ where $P$ is a
polynomial of degree not exceeding $\beta$ with coefficients in $\C$
is called a \emph{Pfaffian function} (resp. Noetherian) of order $\ell$
and degree $(\alpha,\beta)$.

Let $K=\C(\!\{t\}\!)$ or $K=\C(t)$.  Let
$\Omega=(\C^n_{\vx}\times\C_t,0)$ denote the germ of
$\C^n_{\vx}\times\C_t$ at the origin.  If $P$ is a polynomial over the
field $K$ then we will say that $f:\Omega\to\C$ of the form
$f(\vx,t)=P(\vx,\phi_1(\vx),\ldots,\phi_\ell(\vx))$ as above is
Pfaffian (resp. Noetherian) over $K$.  If $K=\C(t)$, we also define
the $t$-degree of $f$ by considering the total degree of $P$ as a
polynomial over $\C$ in $n+1$ variables, where we treat $t$ as an
additional variable and clearing the denominators.

\begin{Rem}
  In contrast to Section~\ref{sec:finiteness}, note that we now
  restrict to $K$ given by \emph{convergent} rings. We also restrict,
  in Definition~\ref{def:pfaff-noether}, to systems of differential
  equations with coefficients independent of $t$.

  The former restriction is a technical convenience meant to give a
  situation more closely analogous to the analytic deformations
  studied in \cite{Gab:MultPfaffian} and \cite{me:deflicity}. We
  believe it is likely one can prove analogous results in the Pfaffian
  case also for formal deformations. The latter restriction is more
  serious: in both the Pfaffian and the Noetherian contexts, the
  existing results produce bounds when one deforms a system of
  Pfaffian or Noetherian equations with a \emph{fixed} chain, but not
  when one deforms the chain itself.
\end{Rem}

Denote by $\pi_x$ (resp $\pi_t$) the projection from $\Omega$ to the
$\C^n$ (resp. $\C$) factor. We will say that an analytic germ
$X\subset\Omega$ is \emph{flat} if it is flat with respect to the
projection $\pi_t$, i.e. if $X$ has no components contained in the
fiber $\pi_t^{-1}(0)$. We will identify a $K$-variety in $(\A^n_K,0)$
with the flat holomorphic variety in $\Omega$ defined by identifying
$t$ with the second factor in $\C^n\times\C$ and removing any non-flat
components. This will allow us to apply results from the usual theory
of Pfaffian varieties over $\C$ to the study of Pfaffian varieties
over $K$.

\begin{Def}[Pfaffian and Noetherian varieties]\label{def:pffaf-variety}
  If $f_1,\ldots,f_k:\Omega\to\C$ are Pfaffian (resp. Noetherian) over
  $K$ with degree $(\alpha,\beta)$ and a common Pfaffian chain of
  order $\ell$, we define $V(f_1,\ldots,f_k)\subset\Omega$ to be
  the analytic germ obtained from $\{f_1=\cdots=f_k=0\}$ by removing any
  components contained in the fiber $t=0$. We call a germ obtained in
  this manner a Pfaffian (resp. Noetherian) variety over $K$. Note
  that since we remove components over $t=0$ there will be no harm in
  assuming from the start that the coefficients of the polynomials
  defining $f_1,\ldots,f_k$ are in fact holomorphic at $t=0$, and the
  corresponding functions are in fact holomorphic germs.
\end{Def}

\begin{Ex}
  Let $n=2$ and consider the Noetherian chain whose elements are given
  by $x,y$. The two equations $f_1=x$ and $f_2=x-ty$ define the
  isolated point $x=y=0$ over generic $t$, but over $t=0$ the entire
  $y$-axis appears as an extra component. In our notation $V(f_1,f_2)$
  will be the analytic germ defined by $x=y=0$.
\end{Ex}

Below we will suppose that a Pfaffian chain has been fixed, and in our
asymptotic notation we will allow the constants to depend on
$\alpha,n,\ell$ (we note that all constants can be explicitly
computed). We will refer to $\beta$ in
Definition~\ref{def:pffaf-variety} as the complexity of the Pfaffian
(resp. Noetherian) variety. If $K=\C(t)$ we   define the
notion of $t$-complexity by replacing degrees with $t$-degrees.

Let $X\subset\Omega$ be a flat analytic germ. If $p:(\C,0)\to(\C^n,0)$
is an analytic germ, we denote by $\tilde p:(\C,0)\to\Omega$ the map
$t\to(p(t),t)$. If every coordinate of $p$ is a polynomial, of degree
less than $r$ for some $r>0$, then we write $\deg p<  r$ (otherwise we set $\deg p=\infty$). 

\begin{Def}
  We denote
  \begin{equation}
    X(r):=\{ p:(\C,0)\to(\C^n,0) : \Im \tilde p\subset X,\  \deg p< r\}.
  \end{equation}
  More generally, if $\vg:(\C^n,0)\to(\C^k,0)$ is holomorphic we
  denote
  \begin{equation}
    X(\vg,r):=\{ \Im \tilde p : \Im \tilde p\subset X,\ \deg \vg(p)< r \}.
  \end{equation}
\end{Def}

Our first main result is the following analog of the Wilkie conjecture
for Pfaffian varieties over $\C(\!\{t\}\!)$.

\begin{Thm}\label{thm:pfaffian-wilkie}
  Let $X\subset\Omega$ be a Pfaffian variety over $\C(\!\{t\}\!)$ of
  complexity $\beta$ and $r\in\N$. Then there exists a collection
  $\{W_\eta\}_\eta$ of irreducible algebraic varieties over
  $\C(t)$ such that $W_\eta\subset X$ as germ at the origin and
  $X(r)\subset\cup_\eta W_\eta(r)$. Moreover,
  \begin{equation}
    \#\{W_\eta\} = \poly(\beta,r), \qquad \deg W_\eta = \poly(\beta,r).
  \end{equation}
\end{Thm}

Theorem~\ref{thm:pfaffian-wilkie} is analogous to the Wilkie
conjecture combined with Pila's ``blocks'' formalism, where each
algebraic $W_\eta$ can be thought of as a block. In particular any
positive dimensional $W_\eta$ lies by definition in $X^\alg$, and
\begin{equation}
  X^\trans(r) = \bigcup_{\eta:\dim W_\eta=0} W_\eta(r).
\end{equation}
By irreducibility $\#W_\eta(r)\le1$ for the zero-dimensional
$W_\eta$, and one concludes that $\#X^\trans(r)=\poly(r,\beta)$.

Our second main result is the following analog of the Wilkie
conjecture for Noetherian varieties over $\C(t)$. We are unable to
prove the same result over $\C(\!\{t\}\!)$ in the Noetherian category due
to certain limitations in the available complexity estimates for the
Noetherian category (see Fact~\ref{fact:noetherian-bound} and the
following discussion). On the other hand, we remark that many of the
classical transcendental functions involved in applications of the
Pila-Wilkie theorem do lie in the Noetherian (and not in the Pfaffian)
category, and the limitation of algebraic dependence on the variable
$t$ does not seem to be overly restrictive.

\begin{Thm}\label{thm:noetherian-wilkie}
  Let $X\subset\Omega$ be a Noetherian variety over $\C(t)$ of
  $t$-complexity $\beta$ and $r\in\N$. Then there exists a collection
  $\{W_\eta\}_\eta$ of irreducible algebraic varieties over
  $\C(t)$ such that $W_\eta\subset X$ as germ at the origin and
  $X(r)\subset\cup_\eta W_\eta(r)$. Moreover,
  \begin{equation}
    \#\{W_\eta\} = \poly(\beta,r), \qquad \deg W_\eta = \poly(\beta,r).
  \end{equation}
\end{Thm}

The proofs of
Theorems~\ref{thm:pfaffian-wilkie}~and~\ref{thm:noetherian-wilkie} are
given in Section~\ref{sec:wilkie-proofs}, after developing some
preliminary material in Section~\ref{sec:weierstrass-polydiscs}.
\section{Weierstrass polydiscs and interpolation}
\label{sec:weierstrass-polydiscs}

\subsection{Analytic germs and Weierstrass coordinates}

\begin{Def}
  Let $X\subset\Omega$ be a flat analytic germ of pure dimension
  $m+1$. Let $\vx:=\vz\times\vw:\C^n\to\C^p_{\vz}\times\C^{n-p}_{\vw}$ be a unitary linear  map. We
  will say that $\vz\times\vw$ are \emph{Weierstrass coordinates} for
  $X$ if $p=m$ and the projection
  $\pi_z\times\pi_t:X\to(\C^m\times\C,0)$ is finite. We denote by
  $e(X,\vx)$ the degree of this projection, i.e. the number of points
  (counted with multiplicities) in the fiber of any point
  $p\in(\C^m\times\C,0)$.
\end{Def}

\begin{Lem}\label{lem:weierstrass-coordinates}
  Weierstrass coordinates exist for every flat analytic germ
  $X\subset\Omega$ of pure dimension $m+1$.
\end{Lem}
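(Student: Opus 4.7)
The plan is to reduce this to a standard generic linear projection argument applied to the special fiber $X_0:=X\cap\{t=0\}$. Since $X$ is flat with respect to $\pi_t$ and has pure dimension $m+1$, no component of $X$ lies in $\{t=0\}$, so standard dimension theory for analytic maps forces $X_0$ to be a pure $m$-dimensional analytic germ at the origin in $\C^n\times\{0\}\cong\C^n$. Now, for any splitting $\vx=\vz\times\vw$ with $\dim\vz=m$, the map germ $\pi_z\times\pi_t\colon X\to(\C^{m+1},0)$ goes between equidimensional spaces, so by the local finite mapping theorem it is finite precisely when its fiber over the origin is zero-dimensional as a germ. That fiber is exactly $X\cap\{\vz=0,\,t=0\}=X_0\cap\{\vz=0\}$. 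Hence the task boils down to producing $m$ linear forms $\vz_1,\ldots,\vz_m$ on $\C^n$ such that the $(n-m)$-plane $\{\vz=0\}$ meets the germ $X_0$ only at the origin.

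This last statement is the classical local Noether normalization: a generic $(n-m)$-dimensional linear subspace of $\C^n$ meets any pure $m$-dimensional analytic germ in $(\C^n,0)$ only at the origin. I would prove it by induction on $m$. For each irreducible component $Y_i$ of $X_0$, choose a linear form $\vz_j\in(\C^n)^*$ which does not vanish on the tangent cone $C_0 Y_i$ away from the origin, a Zariski-open condition on the dual space; this cuts the local dimension of $X_0\cap\{\vz_j=0\}$ down by one on every component, after which one iterates on the resulting pure $(m-1)$-dimensional germ. Because at each step the admissible linear forms form a Zariski-open (hence dense) subset of $(\C^n)^*$, and this open set meets the Stiefel manifold of unitary $m$-frames in a dense open subset, we may in fact take $\vz_1,\ldots,\vz_m$ to be part of a unitary basis of $\C^n$.

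Completing to a unitary coordinate system $\vx=\vz\times\vw$ by any choice of complementary unitary $\vw$ then gives Weierstrass coordinates for $X$: by construction the fiber of $\pi_z\times\pi_t|_X$ over the origin is the zero-dimensional germ $\{0\}$, so the projection is finite. The step requiring the most care is the invocation of the local finite mapping theorem (equivalently, Weierstrass preparation) to upgrade zero-dimensionality of the fiber at $0$ to finiteness of the whole germ of the map; the rest is a routine inductive genericity argument on tangent cones, carried out inside the Stiefel manifold to respect the unitary constraint.
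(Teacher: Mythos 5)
Your proof is correct, and it reaches the same destination by a slightly different route. Like the paper, you begin from the key observation that flatness forces the central fiber $X_0$ to have pure dimension $m$, and you reduce the construction of Weierstrass coordinates for $X$ to a statement about $X_0$. Where the paper then invokes the existence of a \emph{Weierstrass polydisc} for $X_0$ (a ready-made ingredient from~\cite{me:rest-wilkie}) and extends it to $X$ by observing that $X\cap(\bar\Delta_z\times\partial\Delta_w\times\bar D_t)=\emptyset$ once the disc $D_t$ is shrunk, you instead work entirely at the level of germs: you characterize finiteness of $\pi_z\times\pi_t\rest{X}$ by zero-dimensionality of its fiber at the origin (the local finite mapping theorem), identify that fiber as $X_0\cap\{\vz=0\}$, and then re-derive local Noether normalization for $X_0$ via a tangent-cone induction, using Zariski-density to land inside the unitary Stiefel manifold. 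Your version avoids relying on the cited reference and spells out the genericity argument, at the cost of reproving a standard fact; the paper's version is shorter because it quotes that fact and then trades germ-theoretic finiteness for an explicit polydisc picture, which is also what gets reused later in Proposition~\ref{prop:analytic-decomp}. One small inaccuracy in phrasing: equidimensionality of source and target is not what licenses the local finite mapping theorem (the theorem asserts for any map germ $f\colon(X,0)\to(\C^k,0)$ that finiteness of the fiber over $0$ is equivalent to finiteness of the map germ); equidimensionality is only what guarantees the projection is also surjective, which you do not actually need at this step.
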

\begin{proof}
  Since $X$ is flat the fiber $X_0$ over $t=0$ has pure dimension $m$.
  Let $\vx=\vz\times\vw$ be unitary coordinates and
  $\Delta_z\times\Delta_w$ a Weierstrass polydisc (in the sense of
  \cite{me:rest-wilkie}) for $X_0$, i.e.
  $X_0\cap(\bar\Delta_z\times\partial\Delta_w)=\emptyset$. Since $X$
  is closed, for a sufficiently small disc $D_t\subset(\C,0)$ we also
  have
  $X\cap(\bar\Delta_z\times\partial\Delta_w\times\bar D_t)=\emptyset$.
  Then $(\Delta_z\times D_t)\times(\Delta_w)$ is a Weierstrass
  polydisc for $X$, and in particular the projection
  $\pi_z\times\pi_t:X\to(\C^m\times\C,0)$ is finite.
\end{proof}

\begin{Prop}\label{prop:analytic-decomp}
  Let $X\subset\Omega$ be a flat analytic germ of pure dimension $m+1$
  and $\vx$ Weierstrass coordinates for $X$, and set $\nu:=e(X,\vx)$.
  Then for any $f\in\cO(\Omega)$ there exists a function
  \begin{equation}
    P\in\cO_0(\C^m\times\C)[w], \qquad \deg_{\vw_i} P\le \nu-1,\quad i=1,\ldots,n-m
  \end{equation}
  such that $f\rest X\equiv P\rest X$, and where $\cO_0$ stands for the holomorphic germs at zero.
\end{Prop}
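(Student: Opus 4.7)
The plan is to establish, via Nakayama's lemma, that the monomial germs $\vw^\alpha$ with $\alpha_i<\nu$ for every $i=1,\dots,n-m$ generate the analytic local ring $\cO_{X,0}$ as a module over $\cO_0:=\cO_{\C^m\times\C,0}$. Once this is proved, any $f\in\cO(\Omega)$ restricts to $f\rest X = \sum_{\alpha:\,\alpha_i<\nu} c_\alpha(\vz,t)\,\vw^\alpha$ with coefficients $c_\alpha\in\cO_0$, and one simply takes $P=\sum_\alpha c_\alpha\vw^\alpha$.

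First I would invoke the assumed finiteness of $\pi:=\pi_z\times\pi_t\colon X\to(\C^m\times\C,0)$: this makes $\cO_{X,0}$ a finitely generated $\cO_0$-module, and makes the scheme-theoretic fiber $R:=\cO_{X,0}/\mathfrak{m}_0\cO_{X,0}$ (with $\mathfrak{m}_0$ the maximal ideal of $\cO_0$) a local Artinian $\C$-algebra whose length over $\C$ equals the fiber multiplicity, namely $\nu$, by the very definition of $e(X,\vx)$. The ring $R$ is local with residue field $\C$ because the set-theoretic fiber $\pi^{-1}(0)$ is the single point $0$ in the germ.

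By Nakayama applied to the finitely generated $\cO_0$-module $\cO_{X,0}$, it suffices to prove that the images of the monomials $\vw^\alpha$ with $\alpha_i<\nu$ span $R$ over $\C$. Modulo $\mathfrak{m}_0$ the coordinates $\vz$ and $t$ vanish, so $R$ is generated as a $\C$-algebra by $\vw_1,\dots,\vw_{n-m}$, and these elements lie in the maximal ideal $\mathfrak{m}$ of $R$. A local Artinian ring with residue field $\C$ and length $\nu$ satisfies $\mathfrak{m}^\nu=0$: indeed each strict inclusion $\mathfrak{m}^{k+1}\subsetneq\mathfrak{m}^k$ contributes at least one to $\ell_\C(\mathfrak{m})=\nu-1$ (by Nakayama applied to $\mathfrak{m}^k$), so the chain stabilizes at zero by exponent $\nu$ at the latest. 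In particular $\vw_i^\nu=0$ in $R$ for each $i$, so any monomial in $\vw$ having $\alpha_i\ge\nu$ for some $i$ vanishes in $R$, and the monomials $\vw^\alpha$ with $\alpha_i<\nu$ indeed span $R$ over $\C$.

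The main item to verify carefully is the identification of the length of $R$ with $\nu$; this follows directly from the definition of $e(X,\vx)$ as the fiber multiplicity, together with finiteness of $\pi$ making $\cO_{X,0}$ a finite $\cO_0$-module. The nilpotency bound $\mathfrak{m}^\nu=0$ is then a formal consequence of the length computation, and no serious obstacle is anticipated. Conceptually, the argument is just the finite-generation side of Weierstrass preparation for a finite holomorphic map, specialized so as to track the uniform per-coordinate degree bound $\nu$.
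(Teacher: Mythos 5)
Your argument is correct in outline, and it is a genuinely different route from the paper's. The paper disposes of this proposition in one line by invoking \cite[Proposition~7]{me:rest-wilkie}, which proceeds by iterated Weierstrass division against the Weierstrass polynomials of the coordinate projections of $X$; you instead argue purely algebraically, passing to the Artinian fiber ring $R=\cO_{X,0}/\mathfrak m_0\cO_{X,0}$, using Nakayama to reduce to spanning $R$ over $\C$, and the length filtration to get nilpotency of $\mathfrak m_R$. That route is attractive and the Nakayama and nilpotency steps are fine as you state them.

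The one point that needs care, and where I think there is a genuine (if slight) gap, is your identification $\ell_\C(R)=\nu$. You write that this follows ``directly from the definition of $e(X,\vx)$ as the fiber multiplicity.'' But the length of the scheme-theoretic fiber over the origin equals the degree of the finite projection if and only if $\pi_z\times\pi_t$ is flat at $0$, equivalently $\cO_{X,0}$ is Cohen--Macaulay; in general one only has $\ell_\C(R)\geq\nu$. The paper's phrasing (``the number of points ... in the fiber of \emph{any} point'') is ambiguous on this, but the standard meaning of ``degree'' and, more decisively, the way $e(\cdot,\cdot)$ is actually used later (in the proof of Theorem~\ref{thm:pfaff-weierstrass} the count is taken over fibers at $t\neq0$ and fed into Fact~\ref{fact:pfaffian-bound}/Fact~\ref{fact:noetherian-bound}) make clear that $\nu$ is the \emph{generic} degree. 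Since the germs $Z$ arising in Section~\ref{sec:wilkie-proofs} are obtained by discarding components of an intersection, Cohen--Macaulayness is not automatic, so the two quantities can genuinely differ. Concretely, take $m=2$, $n-m=2$, $X$ the (trivial $t$-deformation of the) union of the two $2$-planes $\{\vw_1=\vw_2=0\}$ and $\{\vz_1=\vw_1,\ \vz_2=\vw_2\}$ in $\C^4$: the projection to $(\vz,t)$ has generic degree $\nu=2$, but $R\cong\C[\vw]/(\vw)^2$ has length $3$. Your argument then yields $\mathfrak m_R^3=0$ and a degree bound $\deg_{\vw_i}P\le 2$ rather than the claimed $\deg_{\vw_i}P\le 1$; it proves a statement with $\nu$ replaced by the (possibly larger) special-fiber length. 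The Weierstrass-division proof of \cite[Proposition~7]{me:rest-wilkie} avoids this by producing, for each $\vw_i$, a Weierstrass polynomial whose $\vw_i$-degree is bounded by the generic degree, and dividing $f$ against those polynomials. So the fix for your approach is either to justify flatness in the situation at hand, or to replace the special-fiber length argument with a division against the Weierstrass polynomials of the $1$-dimensional coordinate projections, which gives the degree bound directly in terms of $\nu$.
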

\begin{proof}
  The claim follows from \cite[Proposition~7]{me:rest-wilkie} applied
  to a Weierstrass polydisc in the $\vx,t$ coordinates as constructed
  in the proof of Lemma~\ref{lem:weierstrass-coordinates}.
\end{proof}

\subsection{Interpolation determinants}
In this section we give a non-archimedean analogue of the interpolation determinant method of Bombieri-Pila \cite{BP}.

Let $X\subset\Omega$ be a flat analytic germ of pure dimension $m+1$
and $\vx$ Weierstrass coordinates for $X$, and set $\nu:=e(X,\vx)$.
Let $\vp=(p_1,\ldots,p_\mu)$ with $p_j:(\C,0)\to\C^n$  and
$\vf=(f_1,\ldots,f_\mu)$ with $f_j\in\cO(\Omega)$. We define the
\emph{interpolation determinant} of $\vf$ and $\vp$ to be
$\Delta(\vf,\vp):=\det (f_i(\tilde p_j))$.

\begin{Prop}\label{prop:order-lower-bd}
  Suppose that $\Im \tilde p_j\subset X$ for all $j$. Then
  \begin{equation}\label{eq:order-lb}
    \ord_t \Delta(\vf,\vp) \ge C_n \nu^{-(n-m)/m} \mu^{1+1/m}
  \end{equation}
  where the constant $C_n$ depends only on $n$.
\end{Prop}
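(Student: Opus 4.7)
The plan is to reduce the estimate for $\ord_t\Delta$ to a combinatorial minimization by exploiting three ingredients: the polynomial representation from Proposition~\ref{prop:analytic-decomp}, row-multilinearity of the determinant, and the vanishing $p_j(0)=0$ (which holds since $\tilde p_j:(\C,0)\to\Omega$).

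First I invoke Proposition~\ref{prop:analytic-decomp} to obtain, for each $i$, a polynomial $P_i\in\cO_0(\C^m\times\C)[\vw]$ of $\vw_k$-degree at most $\nu-1$ with $f_i|_X\equiv P_i|_X$, so that $f_i(\tilde p_j(t))=P_i(\tilde p_j(t))$. Then I expand $P_i$ as a Taylor series in the $\vz$-variables alone, keeping $t$ inside the coefficients:
\begin{equation*}
  P_i(\vz,t,\vw)=\sum_{\alpha,\beta}h_{i,\alpha,\beta}(t)\,\vz^\beta\vw^\alpha,\quad \alpha\in\{0,\dots,\nu-1\}^{n-m},\ \beta\in\N^m,\ h_{i,\alpha,\beta}\in\C\{t\}.
\end{equation*}
Writing $\gamma=(\alpha,\beta)$, $|\gamma|:=|\alpha|+|\beta|$, and $\chi_{\gamma,j}(t):=(\pi_z p_j(t))^\beta(\pi_w p_j(t))^\alpha$, the interpolation matrix $M:=(f_i(\tilde p_j(t)))_{ij}$ factors over $\C[[t]]$ as $M=HX$ with $H=(h_{i,\gamma})$ and $X=(\chi_{\gamma,j})$.

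Row-multilinearity of the determinant (equivalently, Cauchy--Binet extended to the infinite index set for $\gamma$) then gives
\begin{equation*}
  \Delta(\vf,\tilde\vp)=\sum_{|S|=\mu}\det(H|_S)\det(X|_S),
\end{equation*}
$S$ ranging over $\mu$-element subsets of the index set; the sum is $t$-adically convergent because, for every $K$, only finitely many $S$ can satisfy $\sum_{s\in S}|s|<K$. Since $p_j(0)=0$, one has $\ord_t\chi_{\gamma,j}(t)\ge|\gamma|$, and each of the $\mu!$ terms in the Leibniz expansion of $\det(X|_S)$ contains exactly one factor $\chi_{s,\cdot}$ per $s\in S$, so $\ord_t\det(X|_S)\ge\sum_{s\in S}|s|$. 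Together with $\ord_t\det(H|_S)\ge0$ this yields
\begin{equation*}
  \ord_t\Delta(\vf,\tilde\vp)\ge\min_{|S|=\mu}\sum_{s\in S}|s|.
\end{equation*}

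The remaining step is purely combinatorial, the minimum being attained on the $\mu$ indices of smallest $|\gamma|$. Using the uniform count $\#\{\gamma:|\gamma|\le N\}\le\nu^{n-m}\binom{N+m}{m}\le c_m\nu^{n-m}N^m$ and enumerating the indices as $|s_1|\le|s_2|\le\dots$, the telescoping identity
\begin{equation*}
  \sum_{k=1}^\mu|s_k|=\sum_{N\ge1}\max\bigl(0,\ \mu-\#\{\gamma:|\gamma|<N\}\bigr)
\end{equation*}
combined with the integral estimate $\int_0^{N_0}(\mu-c_m\nu^{n-m}x^m)\,dx$ where $N_0=(\mu/(c_m\nu^{n-m}))^{1/m}$ evaluates to $\tfrac{m}{m+1}c_m^{-1/m}\nu^{-(n-m)/m}\mu^{1+1/m}$, giving the claimed bound with $C_n:=\tfrac{m}{m+1}c_m^{-1/m}$. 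The main technical nuisance is to arrange this combinatorial estimate so that it is uniform in $\mu$ and $\nu$; the point is that the single upper bound on the index count together with the telescoping identity handles both the regime where the cap $\alpha_k\le\nu-1$ is saturated (large $\mu$) and the regime where it is not (small $\mu$, in which $\nu^{-(n-m)/m}\le1$ makes the estimate harmless).
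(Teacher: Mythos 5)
Your proof is correct and follows essentially the same strategy as the paper's: decompose each $f_i$ via Proposition~\ref{prop:analytic-decomp}, expand the determinant by multilinearity over the monomial basis (your Cauchy--Binet factorization $M=HX$ is a more formal rendering of what the paper calls expanding ``by linearity''), observe that a monomial of weight $|\gamma|$ contributes $t$-order at least $|\gamma|$ since $p_j(0)=0$, and minimize the weight-sum over $\mu$-element index sets using the count of at most $O(\nu^{n-m}N^m)$ monomials of weight $\le N$. Your telescoping identity and integral evaluation make explicit the asymptotic $b\sim(\mu/\nu^{n-m})^{1/m}$ that the paper states without detail, but the combinatorics and the resulting constant are the same.
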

\begin{proof}
  Expand each $f_j$ using Proposition~\ref{prop:analytic-decomp}, and
  then expand the determinant $\Delta(\vf,\vp)$ by linearity in
  each column over $\C(\!\{t\}\!)$. Each term of order $k$ in the
  $\vx$ variables, when evaluated at $\tilde p_j$, has order at least
  $k$ in $t$. Moreover, in the decomposition of each $f_i$ there are
  fewer than $\nu^{n-m} k^{m-1}$ linearly independent terms of each
  order $k$. It follows that
  \begin{equation}\label{eq:order-lb-1}
    \ord_t\Delta(\vf,\vp) \ge \sum_{k=0}^b (\nu^{n-m} k^{m-1})\cdot k
  \end{equation}
  where
  \begin{equation}
    \mu \ge \sum_{k=0}^b \nu^{n-m} k^{m-1}.
  \end{equation}
  Then we have $b\sim (\mu/\nu^{n-m})^{1/m}$, and plugging in
  to~\eqref{eq:order-lb-1} gives \eqref{eq:order-lb}.
\end{proof}

\subsection{Polynomial interpolation determinants}

Fix $d\in N$, and let $\mu$ denote the dimension of the space of
polynomials of degree at most $d$ in $m+1$ variables. Note that
$\mu\sim d^{m+1}$.

For $\vg$ an $m+1$-tuple of functions and $\vp$ a $\mu$-tuple of
points, we define the \emph{polynomial interpolation determinant}
$\Delta^d(\vg,\vp)):=\Delta(\vf,\vp)$ where $\vf$ is the tuple of all
monomials of degree at most $d$ in the coordinates of $\vg$.

\begin{Prop}\label{prop:deg-upper-bd}
  Let $\vg:(\C^n,0)\to(\C^{m+1},0)$ and suppose
  \begin{equation}
    p_i\in X(\vg,r+1), \qquad j=1,\ldots,\mu.
  \end{equation}
  Then
  \begin{equation}
    \deg\Delta^d(\vg,\vp) \le E_n d^{m+2} r.
  \end{equation}
  where $E_n$ is some constant depending only on $n$.
\end{Prop}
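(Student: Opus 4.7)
The proof should be essentially a direct degree count, the companion upper bound to the order lower bound in Proposition~\ref{prop:order-lower-bd}. The plan is to analyze entrywise the matrix whose determinant is $\Delta^d(\vg,\tilde\vp)$ and then bound the degree of the determinant by straightforward multilinear expansion.

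First I would unpack the definitions. By definition, $\Delta^d(\vg,\tilde\vp)=\det\bigl(m_i(\vg(p_j(t)))\bigr)_{i,j}$, where $m_1,\dots,m_\mu$ is an enumeration of monomials of total degree at most $d$ in the $m+1$ coordinates of $\vg$, and $\mu=\binom{d+m+1}{m+1}\le C_n d^{m+1}$. The hypothesis $p_j\in X(\vg,r+1)$ means that every coordinate of $\vg\circ p_j$ is a polynomial in $t$ of degree at most $r$. (Note that monomials in $\vg$ involve only the $\vx$-coordinates on $\C^n$, so we are implicitly pulling back via $\pi_x$; evaluating on $\tilde p_j(t)=(p_j(t),t)$ gives the same result as evaluating on $p_j(t)$.)

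Next I would bound the entries. Since each $m_i$ has total degree at most $d$ and each coordinate of $\vg(p_j(t))$ is a polynomial in $t$ of degree at most $r$, we immediately get
\begin{equation}
  \deg_t\bigl(m_i(\vg(p_j(t)))\bigr)\le d\cdot r.
\end{equation}
Thus every entry of the $\mu\times\mu$ matrix is a polynomial in $t$ of degree at most $dr$.

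Finally, expanding the determinant as a sum over permutations, each summand is a product of $\mu$ entries and hence a polynomial in $t$ of degree at most $\mu\cdot dr$. Combined with $\mu\le C_n d^{m+1}$, this yields
\begin{equation}
  \deg\Delta^d(\vg,\tilde\vp)\le \mu\cdot dr \le C_n d^{m+2}r,
\end{equation}
which is the desired bound with $E_n:=C_n$. There is no real obstacle here; the only thing to be careful about is the bookkeeping between the ``$m+1$'' coordinates of $\vg$ and the dimension count giving $\mu\sim d^{m+1}$, so that the final exponent on $d$ comes out correctly as $m+2$.
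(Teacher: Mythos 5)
Your proposal is correct and follows essentially the same argument as the paper: each entry of the $\mu\times\mu$ matrix (equivalently, each column) has $t$-degree at most $dr$, and since $\mu\sim d^{m+1}$, multilinearity of the determinant gives $\deg_t\Delta^d(\vg,\tilde\vp)\le \mu\cdot dr = O(d^{m+2}r)$. The paper states this in a single sentence by bounding the degree per column; your expansion over permutations is a trivially equivalent way to reach the same bound.
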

\begin{proof}
  There are $\mu\sim d^{m+1}$ columns in the matrix defining
  $\deg\Delta^d(\vg, \vp)$ and each of them has degree at most
  $dr$ in $t$.
\end{proof}

\begin{Cor}\label{cor:hypersurface-select}
  Let $\vg:(\C^n,0)\to(\C^{m+1},0)$. Then there exists a constant
  $A_n$ depending only on $n$ such that if
  \begin{equation}\label{eq:hypersurface-select}
    d > A_n \nu^{n-m} r^m
  \end{equation}
  then $X(\vg,r+1)$ is contained in the zero locus of a polynomial
  $P\in\C(\!\{t\}\!)[\vg]$ of degree at most $d$.
\end{Cor}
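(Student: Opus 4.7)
The plan is a standard linear-algebra/pigeonhole argument combining the two estimates on the interpolation determinant, applied contrapositively. Let $V := \C(\!\{t\}\!)[\vg]_{\le d}$ denote the space of polynomials of degree at most $d$ in $\vg$ with coefficients in $\C(\!\{t\}\!)$; let $\vf=(f_1,\dots,f_\mu)$ be the monomial basis of $V$, so $\mu=\dim_{\C(\!\{t\}\!)} V \sim d^{m+1}$. For each $p$ with $\Im\tilde p\in X(\vg,r+1)$, evaluation $P\mapsto P(\vg(p(t)),t)$ defines a $\C(\!\{t\}\!)$-linear functional $\ev_p\colon V\to\C(\!\{t\}\!)$. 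A nonzero polynomial $P$ vanishing on $X(\vg,r+1)$ is exactly a nonzero element in $\bigcap_p\ker\ev_p$.

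Suppose no such $P$ exists. Then the $\ev_p$ span $V^\vee$ over $\C(\!\{t\}\!)$, so one can select $\vp=(p_1,\dots,p_\mu)$ with $\Im\tilde p_i\in X(\vg,r+1)$ for which the determinant $\det(\ev_{p_j}(f_i))=\Delta(\vf,\tilde\vp)=\Delta^d(\vg,\tilde\vp)$ is a nonzero element of $\C(\!\{t\}\!)$, in fact a nonzero polynomial in $t$. Apply Proposition~\ref{prop:order-lower-bd} and Proposition~\ref{prop:deg-upper-bd}: since a nonzero polynomial in $t$ has order at most its degree,
\begin{equation*}
C_n\,\nu^{-(n-m)/m}\,\mu^{1+1/m}\;\le\;\ord_t\Delta^d(\vg,\tilde\vp)\;\le\;\deg_t\Delta^d(\vg,\tilde\vp)\;\le\;E_n\,d^{m+2}\,r.
\end{equation*}
Substituting $\mu\sim d^{m+1}$, so that $\mu^{1+1/m}\sim d^{m+2+1/m}$, and cancelling $d^{m+2}$ yields $d^{1/m}\lesssim \nu^{(n-m)/m}\,r$, i.e.\ $d\le A_n\,\nu^{n-m}\,r^m$ for some constant $A_n=A_n(n)$ absorbing $C_n,E_n$.

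Contrapositively, if $d>A_n\,\nu^{n-m}\,r^m$ then no such $\mu$-tuple $\vp$ can exist, every $\mu\times\mu$ minor of the evaluation matrix vanishes, and hence there is a nonzero $P\in V$ lying in the kernel of every $\ev_p$, i.e.\ vanishing on all of $X(\vg,r+1)$. This is exactly the statement of Corollary~\ref{cor:hypersurface-select}. The only nontrivial step is the reduction in the first paragraph, converting the sought nonexistence into a nonzero interpolation determinant; the exponent bookkeeping and the use of the fact that $\C(\!\{t\}\!)$ is a field (so that ordinary rank arguments apply) are routine. The main quantitative content is absorbed entirely into Propositions~\ref{prop:order-lower-bd} and~\ref{prop:deg-upper-bd}.
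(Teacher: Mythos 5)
Your argument is correct and is essentially the same as the paper's: both prove the corollary by noting that if the interpolation determinant $\Delta^d(\vg,\tilde\vp)$ were nonzero for some $\mu$-tuple $\vp$ from $X(\vg,r+1)$, then Propositions~\ref{prop:order-lower-bd} and~\ref{prop:deg-upper-bd} combined with $\ord_t \le \deg_t$ for nonzero polynomials would force $d \lesssim \nu^{n-m}r^m$, contradicting~\eqref{eq:hypersurface-select}; the vanishing of all such determinants then yields the desired $P$ by linear algebra. You merely spell out the linear-algebra reduction (evaluation functionals, rank over the field $\C(\!\{t\}\!)$) a bit more explicitly than the paper does, but the content and bookkeeping are identical.
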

\begin{proof}
  By elementary linear algebra, if $\Delta^d(\vg,\vp)$ vanishes
  for each $\vp_i\in X(\vg,r+1)$ then the conclusion of the corollary
  follows. To see that this indeed happens, we use
  Proposition~\ref{prop:order-lower-bd} and
  Proposition~\ref{prop:deg-upper-bd}, and note that the order at zero
  of a polynomial cannot exceed its degree. Therefore, unless
  $\Delta^d(\vg, \vp)$ vanishes we have
  \begin{equation}
     C_n \nu^{-(n-m)/m} \mu^{1+1/m} \le \ord_t\Delta(\vg, \vp) \le
    \deg\Delta^d(\vg, \vp) \le E_n d^{m+2} r.
  \end{equation}
  Since $\mu\sim d^{m+1}$ this gives
  \begin{equation}
    C_n d^{1/m} \le E_n \nu^{(n-m)/m} r,
  \end{equation}
  and for $d$ as in~\eqref{eq:hypersurface-select} we indeed obtain a
  contradiction.
\end{proof}

\section{Proofs of the main Theorems}
\label{sec:wilkie-proofs}

\subsection{The Pfaffian case}

We will use the following result of Gabrielov.

\begin{Fact}[\protect{\cite[Theorem~2.1]{Gab:MultPfaffian}}]\label{fact:pfaffian-bound}
  Let $f_1,\ldots,f_n:\Omega\to\C$ be Pfaffian over $\C(\!\{t\}\!)$ of
  degree $(\alpha,\beta)$ over a common Pfaffian chain of order $\ell$
  and let $X=V(f_1,\ldots,f_n)$. Then the number of isolated points
  (counted with multiplicities) in the fiber $X_t$ converging to the
  origin as $t\to0$ is bounded by $O(\beta^{n+\ell})$ where the
  constants depend only on $\alpha,n,\ell$.
\end{Fact}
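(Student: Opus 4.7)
The plan is to deduce Fact~\ref{fact:pfaffian-bound} by extending Khovanskii's real Pfaffian B\'ezout theorem \cite{Khov} to the complex-analytic deformation setting.

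First, I would translate the statement into a local intersection-multiplicity bound. Treating $t$ as an additional complex variable, $\vf=(f_1,\ldots,f_n)$ becomes a Pfaffian system on $(\C^{n+1},0)$ over the same Pfaffian chain (now viewed as holomorphic in $n+1$ variables), and after removing non-flat components $V(\vf)$ is a one-dimensional analytic germ at the origin. The number of isolated points of $X_{t_0}$ that converge to the origin as $t_0\to 0$ equals the local degree $\sum_C\mult_0(\pi_t\rest{C})$ of the projection $\pi_t$ over those components $C\subset V(\vf)$ passing through the origin, which is in turn bounded by the intersection multiplicity $\mu_0(\vf):=\dim_\C\cO_{\Omega,0}/(\vf)$ when the system is a complete intersection (and by a suitable refinement otherwise). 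So it suffices to bound $\mu_0(\vf)$ by $O(\beta^{n+\ell})$.

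Second, I would relate the complex multiplicity to a count of real nondegenerate zeros via a deformation argument. The principle is that $\mu_0(\vf)$ equals the number of isolated complex zeros of a generic small perturbation $\vf_\e$ of $\vf$ in a fixed small ball around the origin, provided the perturbation preserves the Pfaffian degree profile. Graphing each $\phi_j$ as a new variable $y_j=\phi_j(\vx)$ and realifying, these complex zeros correspond to real nondegenerate zeros of a real Pfaffian polynomial system whose order, degree, and number of variables are controlled by $n,\ell$ (with $\alpha$ absorbed into the implicit constants). Applying Khovanskii's estimate $2^{\binom{L}{2}}\beta^N(\alpha+\beta)^L$ with $N=O(n)$ and $L=O(\ell)$ then yields the required $O(\beta^{n+\ell})$.

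The main obstacle is the second step: a naive realification of the complex system would replace $n$ complex equations in $n$ complex variables by $2n$ real equations in $2n$ real variables, losing a factor of two in the exponent and producing only $O(\beta^{2(n+\ell)})$. The correct route --- which is the content of \cite{Gab:MultPfaffian} --- is a more delicate induction on the length of the Pfaffian chain, exploiting its triangular structure so that each additional Pfaffian function contributes only a single factor of $\beta$ to the multiplicity bound. Making this induction precise, and verifying that the requisite small deformations exist within the Pfaffian category over the same chain (so that Khovanskii's bound continues to apply at each step), is the technical heart of the argument.
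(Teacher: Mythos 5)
The paper does not contain a proof of this statement. It is labeled a \emph{Fact} and is imported directly from \cite[Theorem~2.1]{Gab:MultPfaffian}; the paper treats it as an external black box, so there is no ``paper's own proof'' for your sketch to be measured against.

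Taking your sketch on its own terms: the high-level ingredients are plausibly in the spirit of Gabrielov's argument (deform to isolated simple roots, pass through Khovanskii's real B\'ezout-type bound, exploit the triangular structure of the chain by induction on its length), and you correctly identify the central difficulty --- that a naive realification of $n$ complex equations in $n$ complex unknowns doubles both $n$ and $\ell$ and would at best yield $O(\beta^{2(n+\ell)})$. But you then explicitly defer the resolution of exactly that difficulty, ``the technical heart of the argument,'' back to the very reference whose theorem you are trying to reprove; as written this is an outline, not a proof. There is also a mismatch at the first reduction: the quantity the Fact bounds is the number of isolated points of the fiber $X_t$ converging to the origin as $t\to0$, a deformation multiplicity, which need not coincide with $\dim_\C\cO_{\Omega,0}/(\vf)$ outside the complete-intersection case; Gabrielov's theorem imposes no such hypothesis, and the ``suitable refinement otherwise'' that you gesture at is itself substantial and is not supplied. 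In short, you have correctly located where the proof must do real work, but both of the non-routine steps --- handling the non-complete-intersection case, and avoiding the loss of a factor of two in the exponent --- are left entirely to \cite{Gab:MultPfaffian}, so the proposal does not constitute an independent argument for the stated bound.
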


We now deduce a general result on Weierstrass coordinates for Pfaffian
varieties over $\C(\!\{t\}\!)$. For an analytic germ $X\subset\Omega$
we denote by $X^{\le m}$ the union of the irreducible components of
$X$ having dimension $m$ or less (and similarly for $X^{m}$).

\begin{Thm}\label{thm:pfaff-weierstrass}
  Let $X\subset\Omega$ be a Pfaffian variety over $\C(\!\{t\}\!)$ of
  complexity $\beta$ and $1\le m\le n$. Then there exists an analytic
  germ $Z\subset\Omega$ of pure dimension $m$ satisfying
  $X^{\le m}\subset Z$ and Weierstrass coordinates $\vx$ for $Z$ such
  that $e(Z,\vx)=O(\beta^{n+\ell})$.
\end{Thm}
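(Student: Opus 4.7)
The plan is to realize $Z$ as (the flat part of) a complete intersection of $c$ generic Pfaffian equations drawn from the ideal generated by the defining equations of $X$ --- where $c$ is the codimension in $\Omega$ corresponding to the required pure dimension of $Z$ --- choose Weierstrass coordinates via Lemma~\ref{lem:weierstrass-coordinates}, and bound $e(Z,\vx)$ by a one-parameter scaling that reduces the problem to Gabrielov's bound, Fact~\ref{fact:pfaffian-bound}.

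More concretely, writing $X=V(f_1,\ldots,f_k)$ with $f_j$ Pfaffian over $\C(\!\{t\}\!)$ of degree $(\alpha,\beta)$ sharing the chain $\phi_1,\ldots,\phi_\ell$, I would take $g_1,\ldots,g_c$ to be generic combinations of the $f_j$ in the Pfaffian ideal --- pure $\C$-linear combinations when $k\ge c$, or combinations with polynomial multipliers $h_{ij}(\vx,t)$ of bounded degree when $k<c$ --- and set $Z$ to be the flat part of $V(g_1,\ldots,g_c)$. A standard Bertini argument for generic coefficients guarantees $Z$ has pure codimension $c$ in $\Omega$; since every $g_i$ vanishes on $X$, automatically $X^{\le m}\subset Z$, and the Pfaffian complexity of the $g_i$ remains $O(\beta)$.

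Lemma~\ref{lem:weierstrass-coordinates} then provides Weierstrass coordinates $\vx=\vz\times\vw$ for $Z$. To bound $e(Z,\vx)$ I would fix a generic base point $(z_0,t_0)$ in $\C^{\dim\vz}\times\C$ near the origin and consider the line $(\vz,t)=(sz_0,st_0)$: the fiber of $\pi_z\times\pi_t\rest Z$ over a generic point of this line consists of exactly $e(Z,\vx)$ points, each converging to the origin in $\vw$ as $s\to0$. Substituting $\vz=sz_0$ and $t=st_0$ into each $g_i$ produces a system $\tilde g_1,\ldots,\tilde g_c$ of Pfaffian equations on $(\C^{\dim\vw}\times\C_s,0)$ over $\C(\!\{s\}\!)$, where the restricted chain $\tilde\phi_j(\vw,s):=\phi_j(sz_0,\vw)$ is still a Pfaffian chain in $(\vw,s)$ of order $\ell$ and degree $\alpha$ (restriction to an affine slice preserves triangularity and polynomial degree). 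Applying Fact~\ref{fact:pfaffian-bound} to this system, with $\dim\vw$ in place of $n$ and $s$ in place of $t$, bounds the number of isolated solutions of $\tilde g_1=\cdots=\tilde g_c=0$ converging to the origin, and hence $e(Z,\vx)$, by $O(\beta^{\dim\vw+\ell})\le O(\beta^{n+\ell})$.

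The main technical obstacle I foresee is the Bertini step: arranging the $g_i$ so that $V(g_1,\ldots,g_c)$ has exactly the expected pure codimension while still containing every small-dimensional component of $X$, particularly when $k<c$ and one must allow polynomial multipliers in the Pfaffian ideal without spoiling the complexity bound. The remainder is essentially bookkeeping --- checking that the affine substitution $\vz=sz_0$, $t=st_0$ preserves the Pfaffian chain structure and the degrees $(\alpha,\beta)$ up to constants, that the fiber count of $\pi_z\times\pi_t\rest Z$ along the scaling line equals $e(Z,\vx)$ for generic parameters, and that $s$ can legitimately play the role of $t$ in Fact~\ref{fact:pfaffian-bound}.
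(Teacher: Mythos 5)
Your overall strategy—cut $X^{\le m}$ down by generic combinations of the defining Pfaffian equations, pass to Weierstrass coordinates, and bound the multiplicity via Fact~\ref{fact:pfaffian-bound} applied along a one-parameter line through the base—is the same as the paper's. However, there is a genuine error in the set-up step that propagates into a gap in the multiplicity bound.

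You claim that a Bertini argument makes the flat part of $V(g_1,\ldots,g_c)$ have pure codimension $c$. This cannot be arranged: every $g_i$ vanishes identically on $X$, so $V(g_1,\ldots,g_c)\supset X$, and if $X$ has a component of dimension strictly greater than $m$ (which is the generic situation, and is exactly the reason one writes $X^{\le m}$ rather than $X$), then that component persists in $V(g_1,\ldots,g_c)$ and survives the removal of non-flat pieces. No choice of coefficients, linear or polynomial, can fix this. The correct move, which is what the paper does, is to take $\tilde X=V(g_1,\ldots,g_c)$ and define $Z:=\tilde X^{m}$, the union of exactly the $m$-dimensional components; the Bertini-type genericity is then only used to show $X^{\le m}\subset \tilde X^{m}$, i.e.\ that each small component of $X$ sits inside an $m$-dimensional component of $\tilde X$.

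Once you make that correction, a second issue appears in your application of Fact~\ref{fact:pfaffian-bound}: the points in the fiber of $\pi_z\times\pi_t\rest Z$ over a point of your scaling line are isolated in $Z$, but to invoke Fact~\ref{fact:pfaffian-bound} you need them to be \emph{isolated solutions of the Pfaffian system} $\tilde g_1=\cdots=\tilde g_c=0$, i.e.\ isolated in $\tilde X$. If one of these points also lies on a higher-dimensional component of $\tilde X$, it is not an isolated solution and is invisible to the multiplicity bound. The paper handles this explicitly: writing $Z_b$ for the components of $\tilde X$ of dimension $>m$, it observes that $Z\cap Z_b$ has dimension $<m$, its image under $\pi^Z$ is a proper analytic germ, and therefore a generic line $\C\cdot(v,1)$ avoids it away from the origin—so the $\nu$ fiber points lie in $Z\setminus Z_b$ and really are isolated in $\tilde X$. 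Your proposal is silent on this point; the word ``generic'' in ``a generic base point'' is doing this work implicitly, but the assertion needs to be made and justified.

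Finally, a remark on presentation rather than correctness: the paper applies Fact~\ref{fact:pfaffian-bound} directly to the system $\{\tilde f_j=0\}\cup\{\vz_i=v_i t\}$ in the original $n$ variables, whereas you perform the substitution $\vz=sz_0$, $t=st_0$ and apply the Fact in the $(\vw,s)$-variables over $\C(\!\{s\}\!)$. These are equivalent, and your check that the restricted Pfaffian chain $\tilde\phi_j(\vw,s)=\phi_j(sz_0,\vw)$ has the same order and degree is correct; neither route buys anything over the other. The substance of the proof lives in the two points above.
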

\begin{proof}
  Let $\tilde f_1,\ldots,\tilde f_{n+1-m}$ be $n+1-m$ generic linear
  combinations of the Pfaffian functions defining $X$, and set
  $\tilde X:=V(\tilde f_1,\ldots,\tilde f_{n+1-m})$ and
  $Z=\tilde X^m$. It is easy to see that for a sufficiently generic
  choice, every component of $X^{\le m}$ is contained in a component
  of $\tilde X$ of dimension $m$ and hence $X^{\le m}\subset Z$.

  Let $\vx$ be a set of Weierstrass coordinates for $Z$ and set
  $\pi^Z:=\pi_z\times\pi_t \rest Z$. Denote by $Z_b$ the set of
  components of $\tilde X$ of dimension greater than $m$. Then
  $Z\cap Z_b$ has dimension strictly smaller than $m$, and
  $Y:=\pi^Z(Z\cap Z_b)$ is a (strict) analytic germ in
  $(\C^m\times\C,0)$. It follows that for a generic choice of the
  vector $v\in\C^m$, the line $\C\cdot (v,1)$ meets $Y$ only at the
  origin. Thus, a fiber of the map $\pi^Z$ over any point
  $(v\cdot t,t)$ with $t\in(\C,0)$ and $t\neq0$ consists of
  $\nu=e(Z,\vx)$ isolated points in $Z\setminus Z_b$ which converge to
  the origin as $t\to0$. Each such isolated point is an isolated
  solution of the system
  \begin{equation}
    \{\tilde f_1=\cdots=\tilde f_{n+1-m}=0, \quad \vz_1=v_1\cdot t,\cdots,\vz_m=v_m\cdot t\}
  \end{equation}
  and the bound on $\nu$ thus follows from
  Fact~\ref{fact:pfaffian-bound}.
\end{proof}

The following proposition gives the basic induction step for the proof
of the Wilkie conjecture over $\C(\!\{t\}\!)$.

\begin{Prop}\label{prop:wilkie-indcution}
  Let $X\subset\Omega$ be a Pfaffian variety over $\C(\!\{t\}\!)$. Let
  $W\subset\Omega$ be an irreducible algebraic variety over
  $\C(\!\{t\}\!)$ of dimension $k$ over $\C(\!\{t\}\!)$, and suppose
  that $X\subset W$ and $X\neq W$.

  Then for any $r>0$ there exists an algebraic hypersurface
  $H\not\supset W$ over $\C(\!\{t\}\!)$ of degree
  $O(\beta^{(n+\ell)(n-k+1)}r^{k-1})$ such that
  $X(r)\subset(X\cap H)(r)$.
\end{Prop}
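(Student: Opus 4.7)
The plan is to produce $H$ as the pullback $V(P\circ\vg)$ where $P$ is a polynomial extracted from Corollary~\ref{cor:hypersurface-select} applied to a pure-dimensional analytic enlargement $Z$ of $X$, and $\vg$ is a generic linear projection. Since $W$ is irreducible of $\C(\!\{t\}\!)$-dimension $k$ and $X\subsetneq W$, the analytic dimension of $X$ in $\Omega$ is strictly smaller than that of $W$, so in particular $X=X^{\le k}$. Applying Theorem~\ref{thm:pfaff-weierstrass} with parameter $k$ then produces a pure-dimensional analytic germ $Z\supset X$ of dimension $k$ together with Weierstrass coordinates $\vx=\vz\times\vw$ (where $\dim\vz=k-1$) satisfying $\nu:=e(Z,\vx)=O(\beta^{n+\ell})$.

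Next, I choose a linear projection $\vg:(\C^n,0)\to(\C^k,0)$ generic enough that, by Noether normalization applied to $W$ viewed as a $k$-dimensional $\C(\!\{t\}\!)$-variety in $\A^n_{\C(\!\{t\}\!)}$, the restriction $\vg|_W$ is dominant. Linearity of $\vg$ ensures $X(r)\subset Z(\vg,r+1)$, and Corollary~\ref{cor:hypersurface-select} applied to $Z$ (with corollary parameter $m=k-1$) supplies, for some $d=O(\nu^{n-k+1}r^{k-1})=O(\beta^{(n+\ell)(n-k+1)}r^{k-1})$, a nonzero polynomial $P\in\C(\!\{t\}\!)[\vg]$ of degree at most $d$ whose zero locus contains $Z(\vg,r+1)\supset X(r)$. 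Set $H:=V(P\circ\vg)\subset\Omega$. Then $H$ is an algebraic hypersurface over $\C(\!\{t\}\!)$ of degree at most $d$ (since $\vg$ is linear), and $X(r)\subset(X\cap H)(r)$ by construction. Finally $H\not\supset W$: otherwise $P\circ\vg$ would vanish identically on $W$, so $P$ would vanish on the Zariski-dense subset $\vg(W)\subset\A^k_{\C(\!\{t\}\!)}$, forcing $P\equiv 0$ and contradicting $P\ne 0$.

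The key technical point, and what I expect will need the most care, is the coordination of the two roles played by the integer $k$: on one side $k$ is the Weierstrass dimension of $Z$ (producing, together with $\nu=O(\beta^{n+\ell})$, the desired exponent $r^{k-1}$ in the degree bound), and on the other side $k$ is the target dimension of $\vg$ (which must map $W$ dominantly onto $\A^k$ so as to guarantee $H\not\supset W$). The irreducibility hypothesis on $W$ is exactly what aligns these two uses: it simultaneously forces $\dim X<\dim W$ (so Theorem~\ref{thm:pfaff-weierstrass} can be applied at level $k$ rather than $k+1$), and it guarantees, by Noether normalization, that a generic linear projection of rank $k$ is dominant on $W$.
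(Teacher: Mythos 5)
Your proposal matches the paper's proof essentially step for step: you enlarge $X$ to a pure $k$\,-\,dimensional germ $Z$ via Theorem~\ref{thm:pfaff-weierstrass} (giving $\nu=O(\beta^{n+\ell})$), pick a linear $\vg:(\C^n,0)\to(\C^k,0)$ dominant on $W$, and invoke Corollary~\ref{cor:hypersurface-select} with $m=k-1$ to obtain a nonzero $P\in\C(\!\{t\}\!)[\vg]$ of degree $O(\nu^{n-k+1}r^{k-1})$ whose pullback $H$ contains $X(r)$ but not $W$, exactly as in the paper. The only cosmetic deviation is whether one feeds $r$ or $r+1$ into the corollary, which is absorbed by the $O(\cdot)$; your closing paragraph on the dual role of $k$ is accurate commentary rather than an additional step.
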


\begin{proof}
  Let $\vg:(\C^n,0)\to(\C^k,0)$ be a linear projection which is
  dominant on $W$. Note that necessarily $\dim_\C X<\dim_\C W=k+1$. By
  Theorem~\ref{thm:pfaff-weierstrass} we choose an analytic germ
  $Z\supset X$ of pure dimension $k$ (over $\C$) and Weierstrass
  coordinates $\vx$ for $Z$ such that
  $\nu:=e(Z,\vx)=O(\beta^{n+\ell})$. Then by
  Corollary~\ref{cor:hypersurface-select} we may choose a polynomial
  $P\in\C(\!\{t\}\!)[\vg]$ of degree at most $O(\nu^{n+1-k}r^{k-1})$
  such that
  \begin{equation}
    X(r)\subset Z(\vg,r)\subset H, \qquad H=\{P=0\}
  \end{equation}
  and $W\not\subset H$ since $\vg$ was assumed to be dominant on $W$.
\end{proof}

\begin{proof}[Proof of Theorem~\ref{thm:pfaffian-wilkie}]
  First set $X'=X,W'=\A^n_{\C(\!\{t\}\!)}$. If $X'=W'$ then we can
  finish with $\{W_\eta\}=\{W'\}$. Otherwise we may apply
  Proposition~\ref{prop:wilkie-indcution} with $W=W'$ to find a
  collection of $\poly(\beta,r)$ hypersurfaces
  $H_j\subset\A^n_{\C(\!\{t\}\!)}$ such that
  \begin{equation}
    X'(r) \subset \cup_j (X'\cap H_j)(r).
  \end{equation}
  Denote by $\{H'_k\}_k$ the collection of irreducible components of
  the hypersurfaces $H_j$. It will be enough to prove the claim for
  each pair $(X'\cap H'_k,W'\cap H'_k)$ and take the union of all the
  resulting collections $\{W_\eta\}$. For this we repeat the same
  argument as above with $(X',W')$ replaced by this pair.

  Proceeding in this manner for $n$ steps we end up with $W'$ of
  dimension zero over $\C(\!\{t\}\!)$ and $X'\subset W'$. If $X'=W'$
  we can take $\{W_\eta\}=\{W'\}$, and otherwise the intersection
  $X'\cap W'$ is empty and we take $\{W_\eta\}=\emptyset$.
\end{proof}

\subsection{The Noetherian case}

The proof in the Noetherian case is entirely analogous to the proof in
the Pfaffian case, and we leave the detailed derivation for the
reader. The only significant difference is that in this case
Fact~\ref{fact:pfaffian-bound} should be replaced by the following.

\begin{Fact}[\protect{\cite[Theorem~1]{me:deflicity}}]\label{fact:noetherian-bound}
  Let $f_1,\ldots,f_n:\Omega\to\C$ be Noetherian over $\C(t)$ of
  $t$-degree $(\alpha,\beta)$ over a common Pfaffian chain of order
  $\ell$ and let $X=V(f_1,\ldots,f_n)$. Then the number of isolated
  points (counted with multiplicities) in the fiber $X_t$ converging
  to the origin as $t\to0$ is bounded by $\poly(\beta)$ where the
  constants depend only on $\alpha,n,\ell$.
\end{Fact}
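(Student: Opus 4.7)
The target is a polynomial-in-$\beta$ bound on the local intersection multiplicity at the origin contributed by those points of the fibers $X_t$ that converge to $0$ as $t\to 0$. The plan is to reformulate this contribution as the length of an Artinian local algebra associated to a flat one-parameter family, and then to bound that length by a combination of degree-filtration and D-module techniques applied to the coordinate ring of the Noetherian chain.

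First I would lift the situation to the enlarged ambient space with coordinates $(\vx,t,\boldsymbol\phi)$ so that the graph $\Gamma$ of the chain $(\phi_1,\ldots,\phi_\ell)$ is the $(n+1)$-dimensional submanifold cut out locally by the Noetherian relations~\eqref{eq:noetherian}. Writing $\tilde f_j$ for the polynomials that realize $f_j$ upon restriction to $\Gamma$, the intersection $X\cap\Gamma$ is defined by $n$ polynomial equations of total degree at most $\beta$ in the $n+1+\ell$ ambient variables. The quantity to be bounded is then the generic-fiber multiplicity at the origin of this intersection viewed as a family over the $t$-axis; by semicontinuity it equals the central-fiber multiplicity minus the excess contribution from non-isolated components, i.e.\ the \emph{deflicity} of the family at the origin.

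The core step is to bound this deflicity polynomially in $\beta$, bypassing the obvious $\beta^{O(n+\ell)}$ yielded by a naive Bezout count in the enlarged ambient space. The plan is to set up a degree filtration on the coordinate ring of $\Gamma$ that is compatible with the Noetherian relations, so that the Hilbert function of $\cO_{\Gamma,0}/(\tilde f_1,\ldots,\tilde f_n)$ has controlled jumps depending only on the combinatorics of polynomial multiplication and not on $\ell$. Equivalently one may work with the holonomic D-module on $\C^{n+1+\ell}$ generated by the Noetherian chain and apply a Bernstein-type inequality, whose only size input is the degree $\beta$ of the defining polynomials, with all remaining constants absorbed into the dependence on $\alpha,n,\ell$. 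The flatness of the family over the $t$-axis is what permits passing from the module-theoretic bound to the desired count of points collapsing to the origin.

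The principal obstacle is the absence in the Noetherian category of the triangularity that underlies Khovanskii's inductive bound for Pfaffian multiplicities (Fact~\ref{fact:pfaffian-bound}). Without triangularity one cannot peel off the variables $\phi_j$ one at a time, and indeed for generic Noetherian systems with no deformation parameter no polynomial multiplicity bound is known or expected. The restriction to isolated points converging to the origin as $t\to 0$ is therefore essential: it converts an intractable absolute count into a deflicity bound for a flat family, and carrying out the above D-module/filtration argument uniformly in the chain is the technical heart of \cite{me:deflicity}.
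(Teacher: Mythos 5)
The statement you are trying to prove is imported: the paper records it as a Fact and cites \cite[Theorem~1]{me:deflicity} for its proof, so there is no internal argument to compare with, and your text must be judged as a standalone proof. As such it has a genuine gap: the entire quantitative content --- that the relevant count is $\poly(\beta)$ --- is asserted rather than proved. The appeal to ``a degree filtration on the coordinate ring of the chain graph with controlled jumps'' and, alternatively, to ``a Bernstein-type inequality for the holonomic D-module generated by the chain'' names no precise statement and supplies no argument; no such general principle is known to yield multiplicity bounds polynomial in $\beta$ for Noetherian intersections, and if an argument of this shape worked uniformly in the chain it would essentially settle the Gabrielov--Khovanskii multiplicity conjecture \cite{GK:MultNoetherian}, which the paper explicitly notes is open --- that openness is exactly why Fact~\ref{fact:noetherian-bound} is restricted to deformations algebraic in $t$. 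You yourself defer ``carrying out the argument uniformly in the chain'' to \cite{me:deflicity}, i.e.\ the technical heart is precisely what is missing from your write-up.

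Two of your supporting reductions are also flawed. First, there is no ``obvious $\beta^{O(n+\ell)}$ naive Bezout count in the enlarged ambient space'': the graph of the Noetherian chain is cut out by the differential relations \eqref{eq:noetherian}, not by polynomial equations, so it is a transcendental analytic germ and Bezout gives no bound whatsoever --- this is why the problem is nontrivial in the first place. Second, the identification of the quantity to be bounded with ``central-fibre multiplicity minus the excess from non-isolated components'' is not available: the central fibre $X_0$ may fail to meet the origin in an isolated point, so its local multiplicity can be infinite, and flatness plus semicontinuity does not convert the count of isolated points of $X_t$ converging to the origin into an algebraic intersection number. The statement has to be handled through the fibres with $t\neq0$ directly, which is how the cited theorem is formulated and how the paper uses it.
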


Note that, comparing with Fact~\ref{fact:pfaffian-bound}, in
Fact~\ref{fact:noetherian-bound} the bound depends on the $t$-degree
of the equations with respect to $t$. We must therefore restrict to
varieties over $\C(t)$. It has been conjectured by
Gabrielov-Khovanskii \cite{GK:MultNoetherian} that a similar result
should hold without dependence on the $t$-degree, and this conjecture
would indeed imply an analog of Theorem~\ref{thm:pfaffian-wilkie}
without the restriction to $\C(t)$.

\subsection{A final remark}

In new work in \cite{CNV} it is shown that the finiteness result from Theorem \ref{thm:finite}  no longer holds if one weakens the condition of $\cL_{\rm an}^K$-definability to just definability in a hensel-minimal structure. Hensel minimality is a non-archimedean analogue of o-minimality, introduced in \cite{CHR} \cite{CHRV}. Other bounds based on dimension (instead of on counting) are put forward and are shown in the hensel minimal curve case in \cite{CNV}. The higher dimensional case remains open and would represent the non-archimedean analogue of the bounds in o-minimal structures from \cite{PW}.

%
%
%

\bibliographystyle{plain} \bibliography{nrefs}

\end{document}